\newtheorem{theorem}{Theorem}[section]
\newtheorem{proposition}{Proposition}[section]
\newtheorem{corollary}{Corollary}[section]
\newtheorem{lemma}{Lemma}[section]
\newtheorem{remark}{Remark}[section]
\def\CC{{\mathbb C}}
\def\ZZ{{\mathbb Z}}
\def\HH{{\mathcal H}}
\def\GG{{\mathcal G}}
\def\cC{{\mathcal C}}
\def\cU{{\mathcal U}}
\def\cK{\mathcal K}
\def\cL{{\mathcal L}}
\def\cS{{\mathcal S}}
\def\cD{\mathcal D}
\title[On a family of symmetric hypergeometric functions]{On a family of symmetric hypergeometric functions of several variables and their Euler type integral representation}
\author{Zhuangchu LUO}
\address{School of Mathematics and Statistics, Wuhan
University, Wuhan 430072, China}
\email{zhchluo.math@whu.edu.cn}
\author{Hua CHEN}
\address{School of Mathematics and Statistics, Wuhan
University, Wuhan 430072, China}
\email{chenhua@whu.edu.cn}
\author{Changgui ZHANG}
\address{ Laboratoire P. Painlev\'e (UMR -- CNRS 8524), UFR
Math., Universit\'e de Lille 1, Cit\'e scientifique, 59655
Villeneuve d'Ascq cedex, France}
\email{zhang@math.univ-lille1.fr}
\keywords{Appell-Lauricella's series, $A$-hypergeometric systems,  Euler integral, symmetric group, singular PDE}
\date{December 19, 2011}
\begin{document}

\maketitle

\begin{abstract}
This paper is devoted to the family $\{G_n\}$ of hypergeometric series of any finite number of variables, the coefficients being the square of the multinomial coefficients $(\ell_1+...+\ell_n)!/(\ell_1!...\ell_n!)$, where $n\in\ZZ_{\ge 1}$. All these series belong to the family of the general Appell-Lauricella's series. It is shown that each function $G_n$ can be expressed by an integral involving the previous one, $G_{n-1}$. Thus this family can be represented by a multidimensional Euler type integral, what suggests some explicit link with the Gelfand-Kapranov-Zelevinsky's theory of $A$-hypergeometric systems or with the Aomoto's theory of hypermeotric functions. The quasi-invariance of each function $G_n$ with regard to the action of a finite number of involutions of $\CC^{*n}$ is also established. Finally, a particular attention is reserved to the study of the functions $G_2$ and $G_3$, each of which is proved to be algebraic or to be expressed by the Legendre's elliptic function of the first kind.
\end{abstract}


\numberwithin{equation}{section}
\section{Introduction}\label{section:introduction}

The subject of the present paper is the family of germs of analytic functions $G_n$ at $0\in\CC^n$ given by the following symmetric power series:
\begin{equation}\label{equation:Gnseries}
G_n(x)=\sum_{\ell\in{\mathbb N}^n}\bigl(\frac{\vert\ell\vert !}{\ell !}\bigr)^2\,x^\ell\quad (n=1,2,3,4,...)\,;
\end{equation}
for convention, we set $G_0\equiv1$. Here, the following notations are used: for any positive integer $n$, if $x=(x_1,...,x_n)\in\CC^n$ and $
\ell=(\ell_1,..,\ell_n)\in{\mathbb N}^n$, then:
\begin{equation}\label{equation:ell}
 x^\ell={x_1}^{\ell_1}\,...\,{x_n}^{\ell_n}\,,\quad
 \vert\ell\vert=\ell_1+...+\ell_n,\quad
\ell!=\ell_1!...\ell_n!\,.
\end{equation}

As it will be explained later in \S\ref{subsection:relatedtopics}, these series belong to the family of Appell-Lauricella's series and, in the same time, they may be also considered as examples of Gelfand-Kapranov-Zelevinsky's $A$-hypergeometric functions or as being in relation with Aomoto's theory of hypergeometric functions. Some motivations and future works related to the present paper will be set out in the straight line of these explanations.

We will start by expounding some results established in the present work. The general plan of the paper will be indicated in \S\ref{subsection:plan}, at the end of this introduction.

\subsection{Main results}\label{subsection:mainresults}
One of our main results can be stated as follows.

\begin{theorem}\label{theorem:Gnintegral} The following general formula holds for any positive integer $n$ and any $x=(x',x_n)$ close enough to $0\in\CC^n$:
\begin{equation}\label{equation:Gnintegral}
G_n(x)=\frac{1}{2\pi i}\int_0^{(1+)}G_{n-1}\bigl(\frac{t\,x'}{(t-1)(1-x_nt)}\bigr)\,\frac{dt}{(t-1)(1-x_nt)},
\end{equation}
where the integration contour is composed of a closed path starting from the origin and making a circle around $t=1$ in the anti-clock sense.
\end{theorem}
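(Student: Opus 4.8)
The plan is to prove the integral formula by expanding both sides as power series and matching coefficients. First I would record the elementary generating-function identity
\[
\sum_{k\ge 0}\binom{k+m}{m}^2 y^k \;\text{ as a coefficient extraction},
\]
or more precisely, the one-variable fact that for each fixed $m\in\NN$,
\[
\frac{1}{2\pi i}\int_0^{(1+)}\frac{t^m\,dt}{(t-1)^{m+1}(1-x_nt)^{m+1}}=\sum_{k\ge 0}\binom{k+m}{m}^2 x_n^{\,k}\,,
\]
which is just the residue of the integrand at $t=1$ (the contour encircles only $t=1$), computed by the binomial theorem applied to $(1-x_nt)^{-(m+1)}$ after the substitution $t=1+s$. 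This is the engine of the whole computation and I would isolate it as a preliminary lemma.

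Next I would plug the series \eqref{equation:Gnseries} for $G_{n-1}$ into the right-hand side of \eqref{equation:Gnintegral}. Writing $\ell'=(\ell_1,\dots,\ell_{n-1})$ and $m=\vert\ell'\vert$, the argument $\dfrac{t\,x'}{(t-1)(1-x_nt)}$ raised to the multi-index $\ell'$ produces a factor $t^m\bigl((t-1)(1-x_nt)\bigr)^{-m}$, and the extra $\dfrac{dt}{(t-1)(1-x_nt)}$ bumps both exponents to $m+1$. Thus, after interchanging the (absolutely convergent, for $x$ small) sum over $\ell'$ with the contour integral, the right-hand side becomes
\[
\sum_{\ell'\in\NN^{n-1}}\Bigl(\frac{m!}{\ell'!}\Bigr)^2 (x')^{\ell'}\cdot\frac{1}{2\pi i}\int_0^{(1+)}\frac{t^m\,dt}{(t-1)^{m+1}(1-x_nt)^{m+1}}\,,
\]
and the preliminary lemma evaluates the inner integral as $\sum_{k\ge0}\binom{k+m}{m}^2 x_n^{\,k}$.

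It remains to check that this equals $G_n(x)=\sum_{\ell\in\NN^n}(\vert\ell\vert!/\ell!)^2 x^\ell$. Setting $\ell=(\ell',\ell_n)$ with $\ell_n=k$ and $\vert\ell'\vert=m$, the combinatorial identity to verify is
\[
\Bigl(\frac{m!}{\ell'!}\Bigr)^2\binom{m+k}{m}^2=\Bigl(\frac{(m+k)!}{\ell'!\,k!}\Bigr)^2=\Bigl(\frac{\vert\ell\vert!}{\ell!}\Bigr)^2\,,
\]
which is immediate since $\binom{m+k}{m}=\dfrac{(m+k)!}{m!\,k!}$. Matching the coefficient of $x^\ell$ on both sides then finishes the argument.

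The main obstacle is not any single identity but making the interchange of summation and integration rigorous: one must fix a small polydisc for $x$, observe that on the chosen contour (which can be taken to stay a bounded distance from $t=1$ except for the part that pinches the singularity — handled by deforming to a keyhole and noting the integrand is actually meromorphic with a single pole at $t=1$ inside the loop) the argument of $G_{n-1}$ stays inside its domain of convergence, and then apply dominated convergence / uniform convergence on the compact contour. A clean way to sidestep the contour subtleties entirely is to first prove the lemma and the final identity formally at the level of coefficients, then invoke the fact that both sides of \eqref{equation:Gnintegral} are holomorphic in $x$ near $0$ — the left side by definition, the right side because the integrand depends holomorphically on $x$ and the contour is compact — so that equality of all Taylor coefficients forces equality of the germs.
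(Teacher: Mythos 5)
Your proof is correct, but it takes a genuinely different route from the paper's. The paper realizes $G_n$ as the solution of the Cauchy problem $\delta_{x_n}^2G_n=x_n(\cD+1)^2G_n$, $G_n(x',0)=G_{n-1}(x')$, packages the solution operator as the infinite-order operator $\cK(x_n,\cD')={}_2F_1(\cD'+1,\cD'+1;1;x_n)$ acting on $G_{n-1}$ (Propositions \ref{proposition:KuD} and \ref{proposition:GnGn-1}), and then converts that operator into the contour integral \eqref{equation:Gnintegral} via a Laplace transform in the Euler direction together with two applications of Fubini and Cauchy's formula. You instead verify the identity directly at the level of Taylor coefficients: your preliminary lemma
\[
\frac{1}{2\pi i}\int_0^{(1+)}\frac{t^m\,dt}{(t-1)^{m+1}(1-x_nt)^{m+1}}=\sum_{k\ge 0}\binom{k+m}{m}^2x_n^{k}
\]
is precisely the paper's key identity \eqref{equation:KuzS} specialized to the integer exponent $z=m$, but you prove it by an elementary residue computation rather than by invoking the Euler integral for ${}_2F_1$ with general parameters. (For the record, the residue is extracted most cleanly by first expanding $(1-x_nt)^{-(m+1)}=\sum_{k}\binom{k+m}{m}(x_nt)^k$ and then noting that ${\rm Res}_{t=1}\,t^{m+k}(t-1)^{-m-1}=\binom{m+k}{m}$; the substitution $t=1+s$ you describe produces a double sum that still has to be resummed to reach $\binom{k+m}{m}^2$.) The interchange of $\sum_{\ell'}$ and $\int$ is justified exactly as you indicate: on a fixed compact loop around $t=1$ avoiding $t=1/x_n$, the argument $\cS(x_n;t)\,x'$ stays in the convergence domain $\Omega_{n-1}({\bf 0};1)$ of $G_{n-1}$ for $x$ small, each term is a rational function of $t$ whose only pole inside the loop is $t=1$, and the series converges uniformly on the contour, so no keyhole deformation is even needed. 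What your approach buys is brevity and self-containedness; what the paper's buys is the operator $\cK(x_n,\cD')$ and the alternative representations of Remark \ref{remark:GnGn-1}, which are reused in Section \ref{section:Sym} to prove the quasi-invariance of $G_n$ under the involutions $T_{n,j}$.
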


To each non-zero complex number $a$, we associate the quadratic transformation $\cS(a;t)$ of $\CC_\infty=\CC\cup\{\infty\}$ as follows:
\begin{equation}\label{equation:Sa}
\cS(a;t)=\frac{t}{(t-1)(1-at)}\,;
\end{equation}
more generally, for any integer $m\ge 2$ and any vector $\vec a=(a_1,...,a_m)\in{\CC^*}^m$, we define the following transformation of $\CC_\infty^m$:
\begin{equation}\label{equation:Sam}
\cS(a_1,...,a_m;t_1,...,t_m)=\cS\bigl(\cS(a_m,t_m)\,(a_1 ,a_2,...,a_{m-1});(t_1,t_2,...,t_{m-1})\bigr)\,.
\end{equation}

In addition, it will be convenient to introduce the following notations. For  $1\le j\le n$ and $x=(x_1,...,x_n)$, we write
\begin{equation}\label{equation:xj'}
\bar x_j'=(x_1,...,x_j),\quad
\bar x_j=(x_j,...,x_n),
\end{equation}
so that $x=(\bar x_{j-1}',\bar x_j)$.

Let be denoted by $\cC_1$ any smooth closed-loop homotopic in $\CC\setminus\{1\}$ to the positive-oriented circle $\partial^+D(1;R)$ centered at the point of unity affix  and possessing some radius $R\in(0,1)$.
One can deduce from Theorem \ref{theorem:Gnintegral} the following statement.
\begin{theorem}\label{theorem:GncC}The following relation holds for any positive integer $n$ and all $x$ sufficiently near to $0$ in $\CC^n$:
\begin{equation}\label{equation:GnS}
G_n(x)=\frac {1}{(2\pi i)^n}\,\int_{\cC_1^n}\prod_{j=1}^n \cS(\bar x_j;\bar t_j)\,\frac{dt_j}{t_j}\,,
\end{equation}
where $\bar x_j$ and $\bar t_j$ are as given in \eqref{equation:xj'} for $x=(x_1,...,x_n)$ and $t=(t_1,...,t_n)$, respectively.
\end{theorem}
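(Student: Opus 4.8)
The plan is to establish \eqref{equation:GnS} by induction on $n$, using Theorem \ref{theorem:Gnintegral} as the recursion step. First I would record two preliminary reductions of \eqref{equation:Gnintegral}. On one hand, its kernel is exactly an $\cS$-transform: by \eqref{equation:Sa}, $\cS(x_n;t)=\frac{t}{(t-1)(1-x_nt)}$, so that $\frac{t\,\bar x_{n-1}'}{(t-1)(1-x_nt)}=\cS(x_n;t)\,\bar x_{n-1}'$ and $\frac{dt}{(t-1)(1-x_nt)}=\cS(x_n;t)\,\frac{dt}{t}$. On the other hand, for $x$ close enough to $0$ the integrand of \eqref{equation:Gnintegral} is holomorphic in $t$ throughout a neighbourhood of $\cC_1$ and of the region swept when deforming the loop $\int_0^{(1+)}$ to $\cC_1$ — the only enclosed singularity being $t=1$, around which both wind once — since $\cS(x_n;t)\,\bar x_{n-1}'$ stays near $0$, hence in the domain of convergence of $G_{n-1}$, while $t$ runs over that region (and $1/x_n$ lies far away). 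By Cauchy's theorem we may therefore replace $\int_0^{(1+)}$ by any admissible $\cC_1$, so that Theorem \ref{theorem:Gnintegral} takes the form
\begin{equation}\label{equation:Gnstep}
G_n(x)=\frac{1}{2\pi i}\int_{\cC_1}G_{n-1}\bigl(\cS(x_n;t_n)\,\bar x_{n-1}'\bigr)\,\cS(x_n;t_n)\,\frac{dt_n}{t_n},
\end{equation}
which I take as the engine of the induction.

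The base case $n=1$ is then immediate: since $G_0\equiv1$, \eqref{equation:Gnstep} reads $G_1(x_1)=\frac{1}{2\pi i}\int_{\cC_1}\cS(x_1;t_1)\,\frac{dt_1}{t_1}$, and because $\bar x_1=(x_1)$ and $\bar t_1=(t_1)$ this is exactly \eqref{equation:GnS} for $n=1$ (as a check, both sides equal $1/(1-x_1)$, by the residue at $t_1=1$). For the inductive step, assume \eqref{equation:GnS} holds with $n-1$ in place of $n$. Fix $x$ near $0$ in $\CC^n$ and, for $t_n\in\cC_1$, set $y=y(t_n):=\cS(x_n;t_n)\,\bar x_{n-1}'\in\CC^{n-1}$, so that $y_k=\cS(x_n;t_n)\,x_k$ for $1\le k\le n-1$; as noted, $y$ is close to $0$, hence the inductive hypothesis applies to $G_{n-1}(y)$. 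Substituting it into \eqref{equation:Gnstep} and interchanging the integrations (the integrand is continuous on the compact product $\cC_1^{\,n}$, so Fubini applies) gives
\begin{equation}\label{equation:Gndouble}
G_n(x)=\frac{1}{(2\pi i)^n}\int_{\cC_1^{\,n}}\cS(x_n;t_n)\prod_{j=1}^{n-1}\cS(\bar y_j;\bar t_j)\,\prod_{j=1}^{n}\frac{dt_j}{t_j},
\end{equation}
where in the factors with $j\le n-1$ one reads $\bar y_j=(y_j,\dots,y_{n-1})$ and $\bar t_j=(t_j,\dots,t_{n-1})$.

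It remains to match the two integrands. Trivially $\cS(\bar x_n;\bar t_n)=\cS(x_n;t_n)$. For $1\le j\le n-1$, applying the defining recursion \eqref{equation:Sam} to $\cS(\bar x_j;\bar t_j)=\cS(x_j,\dots,x_n;t_j,\dots,t_n)$ peels off the last pair $(x_n,t_n)$ and yields $\cS\bigl(\cS(x_n;t_n)(x_j,\dots,x_{n-1});(t_j,\dots,t_{n-1})\bigr)$, which is exactly $\cS(\bar y_j;\bar t_j)$ since $\cS(x_n;t_n)(x_j,\dots,x_{n-1})=(y_j,\dots,y_{n-1})=\bar y_j$. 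Hence $\cS(x_n;t_n)\prod_{j=1}^{n-1}\cS(\bar y_j;\bar t_j)=\prod_{j=1}^{n}\cS(\bar x_j;\bar t_j)$, so \eqref{equation:Gndouble} becomes \eqref{equation:GnS}, closing the induction.

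I expect the main obstacle to be not the algebra — the composition identity for the $\cS$'s is built into \eqref{equation:Sam} — but the analytic bookkeeping behind the phrase ``$x$ sufficiently near $0$'': one must verify that $\cS(x_n;t_n)\,\bar x_{n-1}'$, and more generally the iterated image of $x$ under $k$ such maps at the $k$-th stage of the induction, stays inside the polydisc of convergence of $G_{n-1}$ \emph{uniformly} as the loop variables range over $\cC_1$, so that the inductive hypothesis genuinely applies and Fubini is legitimate. Since the map $a\mapsto\cS(a;t)$ restricted to $t\in\cC_1$ is bounded by a constant depending only on the radius $R$, this costs only an explicit (and harmless) shrinking of the neighbourhood of $0$ at each of the $n$ steps; one should also record that the deformation $\int_0^{(1+)}\rightsquigarrow\int_{\cC_1}$ does not sweep across the pole $t=1/x_n$, which is automatic for $x_n$ small since then $1/x_n\notin\overline{D(1;1)}$.
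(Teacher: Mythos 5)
Your proposal is correct and follows essentially the same route as the paper: induction on $n$, with Theorem \ref{theorem:Gnintegral} rewritten in the form $G_n(x)=\frac{1}{2\pi i}\int_{\cC_1}G_{n-1}\bigl(\cS(x_n;t_n)\,x'\bigr)\,\cS(x_n;t_n)\,\frac{dt_n}{t_n}$ as the recursion engine, and the composition rule \eqref{equation:Sam} used to identify $\cS(x_n;t_n)\prod_{j=1}^{n-1}\cS\bigl(\cS(x_n;t_n)\bar x_j;\bar t_j\bigr)$ with $\prod_{j=1}^{n}\cS(\bar x_j;\bar t_j)$. The only difference is that you make explicit the Fubini and domain-of-convergence bookkeeping that the paper leaves implicit, which is a welcome addition rather than a deviation.
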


\bigskip
By making use of the notations \eqref{equation:xj'}, we define the family of involutions $T_{n,j}$ of ${\CC^*}^n$ as follows:
\begin{equation}\label{equation:Tj}
T_{n,j}(x)=(\bar x'_{j-1}/x_j,1/x_j,\bar x_{j+1}/x_j)\,,
 \end{equation}
where $x=(\bar x'_{j-1},x_j,\bar x_{j+1})$. Let $\GG_n$ be the sub-group of ${\rm Aut}({\CC^*}^n)$ generated by all the transformations $T_{n,j}$. One can easily check that all permutations of the coordinates $(x_1,...,x_n)$ belong to $\GG_n$.
 If $F$ denotes any function given in a domain $\Omega\subset{\CC^*}^n$ such that $\sigma\Omega=\Omega$ for all $\sigma\in\GG_n$,  the action of $\GG_n$ on $F$ is induced by the relation $\sigma F(x)=F(\sigma x)$ for $\sigma\in\GG_n$.

Let
\begin{equation}\label{equation:HnQn}
H_n(x)=\sqrt{Q_n(x)}\,G_n(x)\,,\quad
Q_n(x)=\bigl(1-\sum_{j=1}^nx_j\bigr)^2-2\sum_{i\not=j}x_i\,x_j\,.
\end{equation}
Obviously, $Q_n$ is a quadratic symmetric polynomial in $x$; with respect to the action of $T_{n,j}$, one can easily find that
\begin{equation}\label{equation:TjQn}
T_{n,j}\,Q_n(x)=\frac1{x_j^2}\,Q_n(x)\,.
\end{equation}

\begin{theorem}\label{theorem:Hnsymmetry}
For any positive integer $n$, the above-defined function $H_n(x)$ is left invariant by the action of $\GG_n$.

More explicitly, it follows that $
H_{1}=H_2=1
$
and that $H_3$ is related with the Lengendre's elliptic function of the first kind in the following manner:
\begin{equation}\label{equation:H3}
H_3(x)={}_2F_1\bigl(\frac14,\frac34;1;u(x)\bigr)\,,\quad
u(x)=\frac{64\,x_1\,x_2\,x_3}{Q_3(x)^2}\,.
\end{equation}
\end{theorem}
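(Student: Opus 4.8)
The plan is to split the statement into the general symmetry claim and the three explicit identities, and to reduce the former to a single functional equation for $G_n$. Since the coefficients $(|\ell|!/\ell!)^2$ are symmetric in $(\ell_1,\dots,\ell_n)$ and $Q_n$ is visibly symmetric, $H_n$ is invariant under every permutation of the coordinates. A short computation gives $T_{n,j}=(j\,n)\circ T_{n,n}\circ(j\,n)$, so $\GG_n$ is generated by $T_{n,n}$ together with the permutations, and it suffices to prove $H_n\circ T_{n,n}=H_n$. By \eqref{equation:TjQn} one has $Q_n(T_{n,n}x)=x_n^{-2}\,Q_n(x)$, so the whole matter reduces to the quasi-invariance
\[
G_n\bigl(x'/x_n,\,1/x_n\bigr)=-\,x_n\,G_n(x)
\]
(an identity between analytic continuations of $G_n$, where $x=(x',x_n)$): granting it, $H_n^2\circ T_{n,n}=H_n^2$, hence $H_n\circ T_{n,n}=\pm H_n$, and the sign is $+1$ because $T_{n,n}$ fixes the hyperplane $\{x_n=1\}$ pointwise, where $H_n$ does not vanish identically.

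To prove the quasi-invariance I would start from Theorem \ref{theorem:Gnintegral}, rewritten with \eqref{equation:Sa} as
\[
G_n(x',x_n)=\frac1{2\pi i}\int_{\cC_1}\Phi(t)\,dt,\qquad \Phi(t):=G_{n-1}\bigl(\cS(x_n;t)\,x'\bigr)\,\frac{\cS(x_n;t)}{t}.
\]
Applying the same formula at $T_{n,n}(x)=(x'/x_n,1/x_n)$ and using the elementary identity $\cS(1/x_n;t)=x_n\,\cS(x_n;t/x_n)$, the substitution $t\mapsto x_n t$ turns the integral into $\frac{x_n}{2\pi i}\int_{\cC}\Phi(t)\,dt$ with \emph{the same} integrand $\Phi$, but now over a positively oriented loop $\cC$ encircling $t=1/x_n$ and nothing else. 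For $x$ in a small enough neighbourhood of $0$, the poles of $\Phi$ can occur only where $\cS(x_n;t)$ blows up or where $\cS(x_n;t)\,x'$ meets the singular locus of $G_{n-1}$ (a set staying at a bounded distance from $0$), hence only inside arbitrarily small disks about $t=1$ and $t=1/x_n$; off those two disks $\Phi$ is single-valued and holomorphic, with $\Phi(t)=O(t^{-2})$ at $t=\infty$. The residue theorem on $\CC_\infty$ then forces $\int_{\cC_1}\Phi+\int_{\cC}\Phi=0$, i.e. $G_n(x'/x_n,1/x_n)=-x_n\,G_n(x',x_n)$. \emph{The delicate point, and the one I expect to be the main obstacle, is exactly the single-valuedness and holomorphy of $\Phi$ off the two small disks}: one must verify that as $x\to0$ the image of $t\mapsto\cS(x_n;t)\,x'$ over the complement of those disks remains inside a fixed polydisk on which the germ $G_{n-1}$ is a single holomorphic branch, uniformly in $x_n$ near $0$, so that the moving pole near $t=1/x_n$ (which escapes to $\infty$) is genuinely captured by $\cC$.

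For the explicit identities: $G_1(x_1)=\sum_{\ell\ge0}x_1^{\ell}=1/(1-x_1)$ and $Q_1(x_1)=(1-x_1)^2$, so $H_1\equiv1$. With $G_1(y)=1/(1-y)$, Theorem \ref{theorem:Gnintegral} collapses after cancellation to $G_2(x_1,x_2)=\frac1{2\pi i}\int_{\cC_1}\frac{dt}{(t-1)(1-x_2t)-x_1t}$, and the residue at the zero of $-x_2t^2+(1+x_2-x_1)t-1$ near $t=1$ gives $G_2(x_1,x_2)=\bigl((1+x_2-x_1)^2-4x_2\bigr)^{-1/2}=Q_2(x_1,x_2)^{-1/2}$, so $H_2\equiv1$. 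Feeding $G_2=Q_2^{-1/2}$ into Theorem \ref{theorem:Gnintegral} and clearing denominators reduces $G_3$ to the period
\[
G_3(x)=\frac1{2\pi i}\int_{\cC_1}\frac{dt}{\sqrt{R(t)}},\qquad R(t)=\bigl((t-1)(1-x_3t)-t(x_1+x_2)\bigr)^2-4x_1x_2\,t^2,
\]
with $R$ a quartic in $t$ (leading coefficient $x_3^2$), two of whose roots lie near $t=1$ and are the ones enclosed by $\cC_1$. Recognizing this as a period of $y^2=R(t)$, the classical evaluation of complete elliptic integrals through the cross-ratio of the four branch points of $R$ — which, by the symmetry built into $R$, is linked to $u=64\,x_1x_2x_3/Q_3(x)^2$ through the quadratic transformation carrying ${}_2F_1(\tfrac12,\tfrac12;1;\cdot)$ to ${}_2F_1(\tfrac14,\tfrac34;1;\cdot)$ — yields \eqref{equation:H3}. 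Finally, \eqref{equation:TjQn} together with the direct computation $T_{3,j}(x_1x_2x_3)=x_j^{-4}\,x_1x_2x_3$ shows that $u$ is $\GG_3$-invariant, so \eqref{equation:H3} in addition reconfirms that $H_3$ is $\GG_3$-invariant.
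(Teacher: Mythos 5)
Your proposal is correct and follows essentially the same route as the paper: it reduces the invariance of $H_n$ to the quasi-invariance $T_{n,j}G_n=-x_jG_n$ for a single involution via the permutation symmetry, proves that quasi-invariance from the Euler integral of Theorem \ref{theorem:Gnintegral} using the functional equations of $\cS(a;t)$ and a residue/contour argument at infinity (the paper's \S\ref{subsection:Symintegral}), and then evaluates $G_1$, $G_2$ by residues and $G_3$ as a complete elliptic integral reduced through the cross-ratio of the four branch points and a quadratic transformation of ${}_2F_1$ (the paper's \S\ref{subsection:G23G2}--\ref{subsection:G23G3L}). The delicate point you flag (holomorphy of the integrand off the two small disks, so that the contour at infinity contributes nothing) is exactly the step the paper also relies on, and your sign determination via the fixed hyperplane $\{x_n=1\}$ is a reasonable substitute for the paper's continuation argument for the branch of $\sqrt{Q_n}$.
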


Remark that the {\it new} variable $u(x)$ appeared in \eqref{equation:H3} is invariant under the action of any element of the group $\GG_3$.

\subsection{Preliminary commentaries}\label{subsection:relatedtopics}
The present work is situated at the meeting point of the theory of Appell-Lauricella's hypergeometric functions, the Gelfand-Kapranov-Zelevinsky's theory of $A$-hypergeometric functions, and the  Aomoto's theory of hypergeometric functions.

Firstly, it is easy to see that the series $G_n$ belong to the family of the so-called Appell-Lauricella's series. Remember that the hypergeometric functions of two variables were firstly introduced by P. Appell \cite{Ap1} in 1880   and are called currently as Appell's hypergeometric functions, that are denoted as $F_1$, $F_2$, $F_3$ and $F_4$, An extension of these functions is given in 1893 by G. Lauricella \cite{La} in the case of more than two variables and this gives raise to the functions $F_A$, $F_B$, $F_C$ and $F_D$. The starting point is to study how to extend the Gauss hypergeometric functions to the case of several complex variables.

Secondly, relation \eqref{equation:GnS} shows that each $G_n$ is really an $A$-hypergeo\-me\-tric function in the sense of Gelfand-Kapranov-Zelevinsky \cite{GKZ}. Remember that the last theory contains the classical Appell-Lauricella's functions and many mathematicians have worked for the links existing between these theories and also for their extensions or interpretations; see for example \cite{Ad, Dw, Ki,Be}, ....

Lastly, one finds a complete theory of hypergeometric functions mainly developed by Aomoto; see the book \cite{AoK}. Our integral representation \eqref{equation:GnS} can be formally transformed into an Aomoto type formula as well as given in \cite[Theorem 4]{Ki} or \cite[p. 101-102, $F_4$]{DL} but this one is not valid for our series. See in the below Remark \ref{remark:G23G2} for the case of $n=2$.

At the same time, the present work is motivated by the singularities analysis of PDEs; see  \cite{CT,CLT,LCZ} and the references therein. Indeed, we are interested in studying the confluent hypergeometric series such as
$$\sum_{n\ge0}n!(x+ y)^n,\quad
 \sum_{\ell,m\ge 0}\ell!m!x^\ell y^m\quad
  \hbox{or}\quad
   \sum_{\ell,m\ge 0}(\ell+m)!x^\ell y^m ,\quad\hbox {\it etc};
    $$all these series can be viewed as Euler type series of double variables.

    Consider for example the first one: it satisfies formally any of the following two  PDEs:
    $$
\partial_xf(x,y)=\partial_yf(x,y),\quad
(x+y)^2\partial_x f(x,y)-(1-(x+y))f(x,y)=-1;$$
the common initial datum,
$
f(0,y)=\sum_{n\ge 0}n! y^n$, is merely the Euler series \cite{Ra} which  is really a divergent power series in $\CC^*$. Expand $\sum_{n\ge0}n!(x+ y)^n$ as power series of both variables $x$ and $y$, and apply separately the Borel transform \cite{Ra} to each of these variables; then one finds the Appell's series $
F_4(1,1,1,1,x, y)$ \cite[p. 14]{AK}, that is exactly the second series of the family $G_n$ given at the beginning of the paper.

As it is announced in Theorem \ref{theorem:Hnsymmetry}, $G_2$ can be expressed as $1/\sqrt{Q_2(x_1,x_2)}$, what is a symmetric quadratic algebraic function of two variables. Such result is useful to do a Stokes analysis for the singularities of the associated PDE.

Moreover, in \cite{Ap3}, Appell outlined a survey on the Fourier-Bessel's type confluent hypergeometric functions. As the asymptotic behavior of such functions is intimately related to the singularity nature at infinity, the global monodromy problem would be lead to a Stokes analysis with several variables. See \cite{Ra1, Zh, LR} for some cases with only one variable.

In addition, in \cite{Ka1,Ka2,Ka3}, M. Kato gave a very detailed study on Appell's $F_4$,   for the connection problem between the origin and the infinity and also the monodromy group of the associated Pfaff's system. On the other hand,  the monodromy group is considered for $A$-hypergeometric systems in \cite{Be}. It seems that our present work would be helpful to doing  a similar study for a family of functional equations.

Finally, it would be interesting to find some {\it new variables} to express the functions $H_n$ in the spirit of \eqref{equation:H3}. We think this would be a good occasion to make use of classical books \cite{AK,We}, {\it etc.}; in our opinion, some projective geometry ideas would also be helpful, as in the books \cite{AoK,Yo2}.

\subsection{The plan of the paper}\label{subsection:plan}
Section \ref{section:AL} is devoted to studying the family $\{G_n\}$ and especially the functions $G_2$ and $G_3$ from a point view of Appell-Lauricella's series $F_C$. In \S\ref{subsection:ALGn}, it is shown that each $G_n$ can be viewed as a particular example of the series $F_4$ for $n=2$ and $F_C$ for $n\ge3$ variables. An algebraic expression related to the generating function of the Legendre polynomials is then deduced in \S\ref{subsection:ALG2} from some formulas that Appell established for his series $F_{1}$, ..., $F_4$. Being expressed as power series of one variable whose coefficients are given in terms of $F_4$, the function $G_3$ is proved to be quasi-invariant with respect to the action of the involutions $T_{3,j}$ of $\CC^{*3}$, $1\le j\le 3$; see Theorem \ref{theorem:G3symmetry}.

Theorems \ref{theorem:Gnintegral}  and \ref{theorem:GncC} are proved in Section \ref{section:GnI} and the proof of the first one is based upon the fact that each $G_n$ can be considered as a solution of some PDE admitting $G_{n-1}$ as the initial condition.  In \S\ref{subsection:GnIoperator}, an infinite order partial differential operator involving Euler vector fields is associated to a contour integral; this operator permits to obtain $G_n$ from $G_{n-1}$, as shown in Proposition \ref{proposition:GnGn-1}. In \S\ref{subsection:GnIS}, Theorem \ref{theorem:GncC} is proved  by making use of Theorem \ref{theorem:Gnintegral}; with help of the same theorem, the analytic continuation domain of $G_n$ will be considered in \S\ref{subsection:GnIAC}.

Section \ref{section:Sym} contains three paragraphes, the last one of which is devoted to explaining how to establish the invariance of $H_n$ with respect to the action of $\GG_n$. For doing this, we will set out two approaches for studying the action of $\GG_n$ on $G_n$: one is based upon the Euler vector fields (see \S\ref{subsection:SymEuler}) and the other upon the integral representation (see \S\ref{subsection:Symintegral}).

In order to finish the proof of Theorem \ref{theorem:Hnsymmetry} in  Section \ref{section:G23}, we consider the possible explicit expressions for $G_2$ and $G_3$ with the help of Theorem \ref{theorem:Gnintegral}. It will be shown that $G_3$ can be expressed by an elliptic integral; see Theorem \ref{theorem:G3} of \S\ref{subsection:G23G3L}. Paragraph \ref{subsection:G23G3} contains a relatively long preparation in order to put the related differential form into a Riemann's normal form.

Finally, we end the paper with a point of view of ODE about the function $G_3$, in Section \ref{section:G3ODE}. Indeed, one may find one new variable, the variable $u(x)$ as given in Theorem \ref{theorem:Hnsymmetry}, for transforming a second order PDE satisfied by $G_3$ into a Riccati equation. See Theorem \ref{theorem:riccati}.

\section{The point of view of Appell-Lauricella's  series $F_4$ and $F_C$}\label{section:AL}

We will start with the definition of the series $F_4$ and $F_C$, that allows us to see each $G_n$ as a particular case of these classical functions. In particular, from Appell's results, we get a closed expression for $G_2$ in \S\ref{subsection:ALG2} and some symmetry properties for $G_3$ in \S\ref{subsection:ALG3}.

\subsection{Writing $G_n$ by means of $F_4$ and $F_C$}\label{subsection:ALGn}
Let $(\alpha, \beta,\gamma,\gamma')\in\CC^4$ such that $\gamma\notin\ZZ_{\le 0}$ and $\gamma'\notin\ZZ_{\le 0}$. Following \cite{Ap1} and \cite[p.14]{AK}, the associated Appell's series $F_4(\alpha,\beta,\gamma,\gamma',x,y)$ is defined in the following manner:
\begin{equation}\label{eq:dfnF4}
 F_4(\alpha,\beta,\gamma,\gamma',x,y)=\sum_{\ell,m\ge 0}\frac{(\alpha)_{\ell+m}\, (\beta)_{\ell+m}}{(\gamma)_\ell\, (\gamma')_m}\frac{x^\ell y^m}{\ell!\,m!}
\end{equation}
if $(x,y)\in \Omega:=\Omega_2((0,0);1)$, where
\begin{equation}\label{eq:Omega}
\Omega_2((0,0);1)=\{(x,y)\in\CC^2: \sqrt{\vert x\vert}+\sqrt{\vert y\vert}< 1\}\,.
\end{equation}
Its link with the Euler-Gauss' hypergeometric function ${}_2F_1$ may be expressed by the following relation:
\begin{equation}\label{equation:F4Gauss}
F_4(\alpha,\beta,\gamma,\gamma',x,y)=\sum_{\ell\ge 0}\frac{(\alpha)_\ell\,(\beta)_\ell}{(\gamma)_\ell}\,{}_2F_1(\alpha+\ell,\beta+\ell;\gamma';y)\,\frac{x^\ell}{\ell!}\,.
\end{equation}
Using Barne's contour integral for ${}_2F_1$ gives raise to the expression
\begin{eqnarray*}
&&F_4(\alpha,\beta,\gamma,\gamma',x,y)=\frac{\Gamma(\gamma)\,\Gamma(\gamma')}{\Gamma(\alpha)\,\Gamma(\beta)}\,(\frac1{2\pi i})^2\cr
&&\qquad\qquad\int_{-\infty i}^{+\infty i}\int_{-\infty i}^{+\infty i}\frac{\Gamma(\alpha+s+t)\Gamma(\beta+s+t)}{\Gamma(\gamma+s)\,\Gamma(\gamma'+t)}\,\Gamma(-s)\Gamma(-t)(-x)^s(-y)^t\,dsdt\,,
\end{eqnarray*}
what allows us to consider $F_4$ on the domain $(\CC\setminus[0,+\infty[)^2$ in $\CC^2$.

In \cite{La}, Lauricella extended the four types of Appell's series $F_{1-4}$ into $F_{A-D}$ for $n$ variables. By means of the notations $x^\ell$, $\vert\ell\vert$ and $\ell!$ introduced in \eqref{equation:ell}, the definition of $F_C$ may be given as follows:
\begin{equation}\label{Apell.Fc}
F_C(\alpha,\beta,\vec \gamma,x)=\sum_{\ell\in{\mathbb N}^n}\frac{(\alpha)_{\vert\ell\vert}\,(\beta)_{\vert\ell\vert}}
{(\vec \gamma)_\ell\,\ell!}x^\ell\,,
\end{equation}
where $\vec \gamma=(\gamma_1,...,\gamma_n)\in (\CC\setminus\ZZ_{\le 0})^n$ and $(\vec \gamma)_\ell=(\gamma_1)_{\ell_1}...(\gamma_n)_{\ell_n}$.
Instead of $\Omega$ as given in \eqref{eq:Omega},  the domain of convergence of $F_C$ is
 \begin{equation}\label{equation:Omegan}
\Omega_n({\bf 0};1)= \{x\in \CC^n: \sqrt{\vert x_1\vert}+\sqrt{\vert x_2\vert}+\dots+\sqrt{\vert x_n\vert}<1\}\,;
\end{equation}
 see \cite[p. 115]{AK}.

Putting $\alpha=\beta=\gamma_1=\gamma_2=\dots =\gamma_n =1$ in $F_C$ gives raise to the family of power series $G_n$, that is to say:
\begin{equation}\label{equation:GnFC}
G_n(x)=F_C(1,1,\vec 1,x),\qquad
\vec1=(1,1,..,1)\in\CC^n\,.
\end{equation}

\subsection{An algebraic expression  for $G_2$}\label{subsection:ALG2}
Let $x=(x_1,x_2)$ and let $F_2$ be the second Appell's series:
$$
F_2(\alpha,\vec\beta,\vec\gamma,x)=\sum_{\ell\in{\mathbb N}^2}\frac{(\alpha)_{\vert\ell\vert}\,(\vec\beta)_{\ell}}{(\vec\gamma)_\ell\,\ell!}\,x^\ell\quad(\alpha\in\CC,\ \vec\beta\in\CC^2,\ \vec\gamma\in(\CC\setminus\ZZ_{\le 0})^2)\,.
$$
In the direct line of the Pfaff's transformations known for the classic hypergeometric functions, the following relation is established in \cite[p. 27]{AK}:
\begin{eqnarray}\label{equation:F4F2}
&&F_4\bigl(\alpha,\alpha+\frac 12-\beta,\gamma,\beta+\frac 12,x_1,x_2\bigr)\cr
&&\qquad\qquad=
\bigl(1+\sqrt{x_2}\bigr)^{-2\alpha}F_2\,\bigl(\alpha,(\alpha+\frac 12-\beta,\beta),(\alpha,2\beta),
\frac {x_1}{(1+\sqrt{x_1})^2},\frac {4\sqrt{x_1}}{(1+\sqrt{x_2})^2}\bigr)\,.
\end{eqnarray}

Moreover, let $F_1$ be the first Appell's series:
$$
F_1(\alpha,\vec\beta,\gamma,x)=\sum_{\ell\in{\mathbb N}^2}\frac{(\alpha)_{\vert\ell\vert}\,(\vec\beta)_\ell}{(\gamma)_{\vert\ell\vert}\,\ell!}x^\ell\quad
(\alpha\in\CC,\ \vec\beta\in\CC^2,\ \gamma\in\CC\setminus\ZZ_{\le 0});
$$
it follows \cite[p. 35, (10)]{AK} that
\begin{equation}\label{equation:F2F1}
F_2(\alpha,(\beta,\beta'),(\gamma,\alpha),x_1,x_2)=(1-x_2)^{-\beta'}
F_1\bigl(\beta,(\alpha-\beta',\beta'),\gamma,x_1,\frac {x_1}{1-x_2}\bigr)\,.
\end{equation}

Letting $\alpha=\gamma=2\beta=1$ in \eqref{equation:F4F2}  gives raise to the following relation ($x=(x_1,x_2)$):
$$
G_2(x)=(1+\sqrt{ x_2})^{-2}\,
F_2 \bigl(1,(1,\frac 12),(1,1),\frac{ x_1}{(1+\sqrt{ x_2})^2},\frac{4\sqrt{ x_2}}{(1+\sqrt{ x_2})^2}\bigr)\,,
$$
which, by considering \eqref{equation:F2F1}, can be written as follows:
\begin{eqnarray}\label{equation:G2F1}
&&G_2(x)=(1+\sqrt{ x_2})^{-2}\,\cr
&&\qquad\qquad
\bigl(1-\frac{4\sqrt{x_2}}{(1+\sqrt{x_2})^2}\bigr)^{-\frac 12}\,F_1\bigl(1,(\frac 12,\frac 12),1,\frac {x_1}{(1+\sqrt{x_2})^2}, \frac{x_1}{(1-\sqrt{x_2})^2}\bigr)\,.
\end{eqnarray}

\begin{theorem}\label{theorem:G2}Let $H_2$ and $Q_2$ be as given in \eqref{equation:HnQn} for $n=2$. Then the following relation holds for all $x=(x_1.x_2)$ near zero:
\begin{equation}\label{equation:G2}
G_2(x_1,x_2)=\frac 1{\sqrt{Q_2(x_1,x_2)}}\,.
\end{equation}
In other words, it follows that $H_2\equiv1$.
\end{theorem}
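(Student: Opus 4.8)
The plan is to start from the closed relation \eqref{equation:G2F1}, in which the only non-elementary ingredient is the Appell function $F_1\bigl(1,(\tfrac12,\tfrac12),1,\cdot,\cdot\bigr)$, and to observe that this particular $F_1$ collapses to a product of two one-variable binomial series. Indeed, since its first and last parameters coincide ($\alpha=\gamma=1$), the Pochhammer ratio $(\alpha)_{|\ell|}/(\gamma)_{|\ell|}$ equals $1$ for every $\ell\in\NN^2$, so the defining double series of $F_1$ factors and, by the binomial theorem,
\[
F_1\bigl(1,(\tfrac12,\tfrac12),1,u,v\bigr)=\Bigl(\sum_{k\ge0}\frac{(1/2)_k}{k!}\,u^k\Bigr)\Bigl(\sum_{k\ge0}\frac{(1/2)_k}{k!}\,v^k\Bigr)=(1-u)^{-1/2}(1-v)^{-1/2},
\]
each radical denoting the principal branch, equal to $1$ at $u=v=0$.

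Next I would substitute $u=x_1/(1+\sqrt{x_2})^2$ and $v=x_1/(1-\sqrt{x_2})^2$, clear the denominators, and use $(1+\sqrt{x_2})(1-\sqrt{x_2})=1-x_2$ to obtain
\[
F_1=\frac{1-x_2}{\sqrt{(1+\sqrt{x_2})^2-x_1}\ \sqrt{(1-\sqrt{x_2})^2-x_1}}\,.
\]
In parallel, the scalar prefactor in \eqref{equation:G2F1} simplifies via the identity $(1+\sqrt{x_2})^2-4\sqrt{x_2}=(1-\sqrt{x_2})^2$, which gives $1-\tfrac{4\sqrt{x_2}}{(1+\sqrt{x_2})^2}=\tfrac{(1-\sqrt{x_2})^2}{(1+\sqrt{x_2})^2}$ and hence $(1+\sqrt{x_2})^{-2}\bigl(1-\tfrac{4\sqrt{x_2}}{(1+\sqrt{x_2})^2}\bigr)^{-1/2}=\tfrac{1}{1-x_2}$. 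Multiplying the two, every power of $1\pm\sqrt{x_2}$ cancels and one is left with
\[
G_2(x_1,x_2)=\frac{1}{\sqrt{(1+\sqrt{x_2})^2-x_1}\ \sqrt{(1-\sqrt{x_2})^2-x_1}}\,.
\]

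The last step is purely algebraic: writing $(1\pm\sqrt{x_2})^2-x_1=(1+x_2-x_1)\pm2\sqrt{x_2}$ and applying the difference-of-squares formula yields
\[
\bigl((1+\sqrt{x_2})^2-x_1\bigr)\bigl((1-\sqrt{x_2})^2-x_1\bigr)=(1+x_2-x_1)^2-4x_2=(1-x_1-x_2)^2-4x_1x_2=Q_2(x_1,x_2),
\]
which gives \eqref{equation:G2}, and then $H_2\equiv1$ is immediate from \eqref{equation:HnQn}. I expect no serious obstacle: the substantial work has already been done in deriving \eqref{equation:G2F1} from Appell's quadratic and Pfaff-type transformations, and what remains is elementary. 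The only point requiring care is the bookkeeping of the determinations of the square roots, so that the chain of identities holds exactly --- not merely up to sign --- for $x$ close to $0$, where all the radicands stay near $1$ and no branch point is met; it is also worth noting that $\sqrt{x_2}$ has disappeared from the final expression, consistently with $G_2$ being a single-valued germ at the origin.
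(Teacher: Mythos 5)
Your proposal is correct and follows essentially the same route as the paper: the paper's own proof likewise starts from \eqref{equation:G2F1}, invokes the evaluation $F_1\bigl(1,(\tfrac12,\tfrac12),1,u,v\bigr)=\bigl((1-u)(1-v)\bigr)^{-1/2}$, and concludes ``by direct calculations.'' You merely make explicit what the paper leaves implicit --- the factorization of this $F_1$ into two binomial series (valid because $\alpha=\gamma=1$ kills the Pochhammer ratio) and the algebraic simplification down to $Q_2$ --- and your computations check out, including the branch bookkeeping near the origin. (The paper also gives a second, independent derivation in \S\ref{subsection:G23G2} via residues in the Euler integral \eqref{equation:Gnintegral}, but that is not the proof of Theorem \ref{theorem:G2} itself.)
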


\begin{proof}
By taking into account the fact that
\[
F_1\left(1, (\frac 12,\frac 12),1,x,y\right)
=\left( (1-x)(1-y)\right)^{-\frac 12},
\]
one gets finally \eqref{equation:G2} from \eqref{equation:G2F1} by direct calculations.
\end{proof}

\begin{remark}\label{remark:G2}
If one introduces the following notations:
$$
z=\frac{x_1+x_2}{x_1-x_2},\quad
r=x_1-x_2,
$$
then the above relation \eqref{equation:G2} can be read as follows:
$$
G_2(x_1,x_2)=\frac1{\sqrt{1-2zr+r^2}}\,,
$$
that is merely the generating function of the Legendre polynomials $P_n(z)$. Thus one can find \eqref{equation:G2} by making use of the relation $P_n(z)={}_2F_1(-n,1;1;(1-z)/2)$ ({\it cf} \cite[p. 295, (6.3.5)]{AAR}).
\end{remark}

\begin{remark}\label{remark:G2extension}
From the algebraic expression \eqref{equation:G2}, it follows that $G_2$ may be extended on the universal covering $\tilde \Omega_2$ of $\CC_2^2\setminus S$ if we denote by $\CC_2$ the Riemann surface of $\sqrt z$ and by $S$ the following subset of $\CC_2^2$:
$$
S=S_+\cup S_-,\quad
S_\pm=\{(x_1,x_2)\in\CC_2\times\CC_2: \sqrt {x_1}\pm\sqrt{x_2}=1\}\,.
$$
\end{remark}

Let $\pi$ the canonical map of $\CC_2$ onto $\CC^*$ and  $\Pi=(\pi_1,\pi_2)$ the induced map from $\tilde\Omega_2$ to $\CC^2$. Let $\Omega_2=\Pi(\tilde\Omega_2)$. One may suppose that $\Omega_2$ is invariant under the action of $\GG_2$; see (1.2) for the definition of $\GG_n$.

\begin{remark}\label{remark:G2symmetry}
Theorem \ref{theorem:Hnsymmetry} is proved in the case of $n=2$.
\end{remark}

\subsection{Action of the group $\GG_3$ on $G_3$}\label{subsection:ALG3}
We start with the following  expression of $G_3$ in terms of the Appell's series $F_4$:
\begin{eqnarray}\label{equation:G3F4}
G_3(x)=\sum_{k=0}^\infty F_4(k+1,k+1,1,1,x_1,x_2)\,{x_3^k}\quad(x=(x_1,x_2,x_3))\,.
\end{eqnarray}

\begin{lemma}\label{lemma:F4}For any integer $k$, the following identity holds for all $(x,y)\in(\CC\setminus(0,+\infty))^2$ such that $\frac xy\notin (0,+\infty)$:
\begin{equation}\label{equation:F4k}
F_4(k,k,1,1,x,y)=- y^{-k}F_4\left(k,k,1,1,\frac xy,\frac 1y\right).
\end{equation}
\end{lemma}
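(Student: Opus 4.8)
The plan is to prove the identity \eqref{equation:F4k} by analytic continuation from a region where a direct series manipulation is available, exploiting the symmetry of $F_4$ in its role of being built from two copies of ${}_2F_1$. First I would note that by \eqref{equation:F4Gauss} with $\alpha=\beta=k$, $\gamma=\gamma'=1$ one has
\[
F_4(k,k,1,1,x,y)=\sum_{\ell\ge0}\frac{((k)_\ell)^2}{(\ell!)^2}\,{}_2F_1(k+\ell,k+\ell;1;y)\,x^\ell,
\]
so the claim reduces to an identity for the one-variable section and its Kummer/Pfaff-type transform. The cleanest route, however, is through the Barnes double-integral representation displayed just above: with $\alpha=\beta=k$, $\gamma=\gamma'=1$ the prefactor $\Gamma(\gamma)\Gamma(\gamma')/\Gamma(\alpha)\Gamma(\beta)=1/\Gamma(k)^2$ and the integrand is symmetric in $s\leftrightarrow t$ and in $x\leftrightarrow y$. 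I would perform the change of variables $s\mapsto -k-s-t$ (equivalently reflect one Barnes variable), which sends $(-x)^s(-y)^t$ to $(-x)^{-k-s-t}(-y)^t=(-y)^{-k}\,(-x/y)^{?}$-type factors; tracking the Gamma factors through the reflection formula $\Gamma(z)\Gamma(1-z)=\pi/\sin\pi z$ should convert the integrand for $F_4(k,k,1,1,x,y)$ into $-y^{-k}$ times the integrand for $F_4(k,k,1,1,x/y,1/y)$, the sign and the factor $-1$ coming precisely from the sine/reflection bookkeeping and the shift of the contour. The hypotheses $(x,y)\in(\CC\setminus(0,+\infty))^2$ and $x/y\notin(0,+\infty)$ are exactly what is needed for all four points $x,y,x/y,1/y$ to lie in the domain of definition $(\CC\setminus[0,+\infty[)^2$ of the Barnes integral, so both sides are simultaneously well defined and analytic there.

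An alternative, and perhaps safer, route is to first check \eqref{equation:F4k} formally as an identity of (possibly divergent) power series, or for $k$ a fixed small positive integer where ${}_2F_1(k+\ell,k+\ell;1;y)$ is an explicit algebraic/rational-type expression, and then invoke the principle of analytic continuation: both sides are holomorphic on the connected set $\{(x,y):x,y,x/y\notin[0,+\infty[\}$ and agree on an open subset, hence everywhere. For the formal check I would use the classical Pfaff transformation
\[
{}_2F_1(a,b;c;z)=(1-z)^{-a}\,{}_2F_1\!\left(a,c-b;c;\tfrac{z}{z-1}\right)
\]
twice (once in each Barnes variable, or once at the ${}_2F_1$ level after \eqref{equation:F4Gauss}) with $a=b=k+\ell$, $c=1$; the composite effect of the two Pfaff moves on $F_4$ is a known quadratic-type relation, and specializing its parameters to $(k,k,1,1)$ collapses it to \eqref{equation:F4k}. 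The apparent $-1$ on the right-hand side is not a typo: with $\alpha=\beta$ and $\gamma=\gamma'$ the natural transformation of $F_4$ relates $F_4$ at $(x,y)$ to $F_4$ at $(x/y,1/y)$ with a factor $y^{-k}$ times a ratio of Gamma functions that, at these integer parameters, equals $-1$ (this is where $\Gamma(k)$ in the denominator interacts with a $\Gamma$ evaluated at a nonpositive integer argument appearing after the reflection).

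The main obstacle I anticipate is the careful sign-and-branch bookkeeping: making the contour shift in the Barnes integral rigorous requires controlling the poles crossed and justifying that no residue contributions survive (or that they cancel), and tracking $(-x)^s$, $(-y)^t$ through the substitution demands a fixed, consistent choice of branch on $\CC\setminus[0,+\infty[$ for each of $x$, $y$, $x/y$, $1/y$ — this is exactly why the hypothesis $x/y\notin(0,+\infty)$ is imposed. A secondary but real subtlety is the claim ``for any integer $k$'': for $k\le 0$ the factor $\Gamma(k)^{-1}$ vanishes and the Barnes representation degenerates, so for nonpositive $k$ I would instead argue that $F_4(k,k,1,1,x,y)$ is then (up to the convention made) a polynomial-type object and verify \eqref{equation:F4k} directly on it, treating the general-$k$ statement as the union of the positive-integer case (handled by the transformation above) and the nonpositive-integer case (handled by an explicit, finite computation). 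Once the identity is established on the stated open set, holomorphy gives it on the whole connected domain, completing the proof.
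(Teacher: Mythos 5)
The heart of this lemma is a connection formula at \emph{resonant} parameters, and that is precisely the point your proposal leaves unresolved. The paper's proof quotes the classical two-term formula relating $F_4(\alpha,\beta,\gamma,\gamma',x,y)$ to $F_4$ at $(x/y,1/y)$ (itself obtained by applying the $0\to\infty$ connection formula of ${}_2F_1$ inside \eqref{equation:F4Gauss}), whose coefficients are $\frac{\Gamma(\gamma')\Gamma(\beta-\alpha)}{\Gamma(\gamma'-\alpha)\Gamma(\beta)}$ and its mirror image. At $\alpha=\beta=k$, $\gamma'=1$ each coefficient is of the indeterminate form $\Gamma(0)/\Gamma(1-k)$, and the finite factor $-y^{-k}$ emerges only from the cancellation between the \emph{two} terms; the paper obtains it by perturbing to $\alpha=k+\epsilon$, $\beta=k-\epsilon$ and letting $\epsilon\to0$. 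Your account of the sign (``a single ratio of Gamma functions that equals $-1$'') misdescribes this mechanism. Your Barnes-integral substitution $s\mapsto -k-s-t$ also does not close up: after using the reflection formula, the transformed integrand differs from the target one by a factor of the form $\pm\sin\pi(s+t)/\sin\pi s$, which is not constant on the contour --- this leftover is exactly where the second term of the generic connection formula hides. Likewise, Pfaff's transformation sends $z$ to $z/(z-1)$, not to $1/z$, so ``two Pfaff moves'' cannot produce the map $(x,y)\mapsto(x/y,1/y)$; what is needed is the two-term continuation of ${}_2F_1$ to infinity followed by the confluence $\alpha\to\beta$, which is the paper's route.

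Your fallback of ``checking the identity formally as power series and then continuing analytically'' cannot work either: the series defining the two sides of \eqref{equation:F4k} converge in disjoint regions (one near $(x,y)=(0,0)$, the other near $(x/y,1/y)=(0,0)$), so there is no open set on which a termwise comparison is available; the statement is intrinsically one about analytic continuation and must be proved as such. Finally, a warning about your plan for $k\le 0$: a direct check on the terminating case $k=-1$ gives $F_4(-1,-1,1,1,x,y)=1+x+y$ while $-y\,F_4(-1,-1,1,1,x/y,1/y)=-(1+x+y)$, and for $k=0$ the two sides are $1$ and $-1$; so the identity as literally stated fails for nonpositive integers, and your proposed ``explicit finite computation'' would disprove rather than prove it there. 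The lemma should be read, and is only used in the paper (via \eqref{equation:G3F4}, where the parameter is $k+1\ge1$), for positive integers $k$.
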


\begin{proof}
 Recall the following relation \cite[p. 26, (37)]{AK}, which can be obtained from the connection formula of ${}_2F_1$ between zero and infinity:
\begin{eqnarray*}
&&F_4(\alpha,\beta,\gamma,\gamma',x,y)=\frac{\Gamma (\gamma ')\Gamma (\beta-\alpha)}{\Gamma (\gamma '-\alpha)\Gamma(\beta)}
(-y)^{-\alpha}\, F_4\bigl(\alpha,\alpha+1-\gamma',\gamma,\alpha+1-\beta,\frac xy,\frac 1y\bigr)\cr
 &&\qquad\qquad\qquad
 \qquad+\frac{\Gamma (\gamma ')\Gamma (\alpha-\beta)}{\Gamma(\alpha)\Gamma (\gamma '-\beta)}
(-y)^{-\beta} F_4\bigl(\beta+1-\gamma',\beta,\gamma,\beta+1-\alpha,\frac xy,\frac 1y\bigr).
\end{eqnarray*}
If we let $\alpha=k+\epsilon$, $\beta=k-\epsilon$, $\gamma=\gamma'=1$, where $k\in\ZZ$ and $\epsilon\to 0$, then the coefficients will contain $\Gamma(\pm2\epsilon)$, $\Gamma(k\pm\epsilon)$ and $\Gamma(1-k\pm\epsilon)$; a direct computation allows us to obtain the desired relation \eqref{equation:F4k}.
\end{proof}

\begin{theorem}\label{theorem:G3symmetry} The function $G_3(x)$ satisfies the following relation:
\begin{equation}\label{equation:G3Tj}
T_{3,j}G_3(x)=-{x_j}\,G_3(x)\,,\quad 1\le j\le 3.
\end{equation}
Equivalently, the function $H_3$ given in \eqref{equation:HnQn} with $n=3$ is invariant under the action of $\GG_3$.
\end{theorem}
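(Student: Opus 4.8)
The plan is to prove the quasi-invariance \eqref{equation:G3Tj} directly and then to read off the $\GG_3$-invariance of $H_3$ from it together with \eqref{equation:TjQn}. Since the defining series \eqref{equation:Gnseries} makes $G_3$ fully symmetric in $(x_1,x_2,x_3)$, and since $T_{3,2}$ and $T_{3,3}$ differ from $T_{3,1}$ only by a relabelling of the coordinates, it suffices to establish \eqref{equation:G3Tj} for $j=1$, i.e.
\[
G_3\bigl(1/x_1,\ x_2/x_1,\ x_3/x_1\bigr)=-x_1\,G_3(x_1,x_2,x_3);
\]
the cases $j=2,3$ then follow by permuting the variables and invoking the symmetry of $G_3$.

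For $j=1$ I would start from the one-variable expansion \eqref{equation:G3F4} and substitute $x\mapsto T_{3,1}x$, so that
\[
G_3(T_{3,1}x)=\sum_{k\ge0}F_4\bigl(k+1,k+1,1,1,\tfrac1{x_1},\tfrac{x_2}{x_1}\bigr)\,\bigl(\tfrac{x_3}{x_1}\bigr)^k .
\]
The point is to rewrite each coefficient with Lemma \ref{lemma:F4}. Written for the exponent $k+1$ and rearranged, that lemma says $F_4(k+1,k+1,1,1,x/y,1/y)=-y^{k+1}F_4(k+1,k+1,1,1,x,y)$. Because $\gamma=\gamma'=1$ here, $F_4$ is symmetric in its two variable slots (and, trivially, in its two parameters), so $F_4(k+1,k+1,1,1,1/x_1,x_2/x_1)=F_4(k+1,k+1,1,1,x_2/x_1,1/x_1)$, which is precisely the left-hand side above with $x=x_2$, $y=x_1$; hence it equals $-x_1^{k+1}F_4(k+1,k+1,1,1,x_2,x_1)=-x_1^{k+1}F_4(k+1,k+1,1,1,x_1,x_2)$. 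Substituting back and pulling out the factor $-x_1$ reconstructs $-x_1\sum_{k\ge0}F_4(k+1,k+1,1,1,x_1,x_2)x_3^k=-x_1G_3(x)$. I would run this computation first on the open set where Lemma \ref{lemma:F4} literally applies---where $x_1,x_2\notin(0,+\infty)$, $x_2/x_1\notin(0,+\infty)$, and all the series converge---and then extend \eqref{equation:G3Tj} to the full analytic-continuation domain of $G_3$ by the identity theorem.

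For the reformulation, \eqref{equation:TjQn} gives $T_{3,j}Q_3=Q_3/x_j^2$, so \eqref{equation:G3Tj} yields
\[
T_{3,j}\bigl(H_3^2\bigr)=\bigl(T_{3,j}Q_3\bigr)\bigl(T_{3,j}G_3\bigr)^2=\frac{Q_3}{x_j^2}\cdot x_j^2\,G_3^2=H_3^2 ,
\]
and, the $T_{3,j}$ generating $\GG_3$, the function $H_3^2$ is $\GG_3$-invariant. Since $H_3$ is single-valued and holomorphic on the connected domain $\Omega_3$, each $\sigma\in\GG_3$ acts on $H_3$ by a locally---hence globally---constant sign $\varepsilon(\sigma)\in\{\pm1\}$, and $\varepsilon$ is a character of $\GG_3$; it then remains only to check $\varepsilon(T_{3,j})=+1$ on the generators, i.e. that the fixed branch of $\sqrt{Q_3}$ satisfies $T_{3,j}\sqrt{Q_3}=-\sqrt{Q_3}/x_j$ rather than $+\sqrt{Q_3}/x_j$, which one reads off by tracking that branch or, equivalently, by continuity from a point of $\Omega_3$ at which $Q_3$ and $G_3$ are both real and positive.

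I expect the two delicate points to be: first, staying inside the domains where both Lemma \ref{lemma:F4} and the $x\leftrightarrow y$ symmetry of $F_4$ are literally valid at the transformed argument $T_{3,1}x$, so that the series manipulation legitimately precedes the analytic continuation; and second, the sign bookkeeping for $\sqrt{Q_3}$ under $T_{3,j}$ in the final step, which is exactly what separates genuine invariance of $H_3$ from mere anti-invariance. Neither is conceptually hard, but both are easy to mishandle.
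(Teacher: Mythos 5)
Your argument is essentially the paper's own proof of Theorem \ref{theorem:G3symmetry}: the authors likewise combine the expansion \eqref{equation:G3F4} with Lemma \ref{lemma:F4} and the symmetry of $F_4(\alpha,\alpha,\gamma,\gamma,x,y)$ and of $G_3$ in their variables, and they settle the sign of $T_{3,j}\sqrt{Q_3}$ by the same continuation argument in \S\ref{subsection:SymHn}. You have merely written out the substitution and the domain/sign bookkeeping that the paper leaves implicit, so the proposal is correct and matches the paper's route.
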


\begin{proof}
It comes immediately from \eqref{equation:G3F4} and \eqref{equation:F4k}, by noticing that both $G_n(x)$ and $ F_4(\alpha,\alpha,\gamma,\gamma,x,y)$ are symmetrical with respect to their respective variables.
\end{proof}

\begin{remark}\label{remark:G3}
Theorem \ref{theorem:G3symmetry} may be viewed as a particular case $n=3$ of Theorem \ref{theorem:GnTn}, which extends the equality \eqref{equation:G3Tj} to the case of all positive integer $n$; see Section \ref{section:Sym}.
\end{remark}
Consequently, in order to complete the proof of Theorem \ref{theorem:Hnsymmetry} for $n=3$, it remains only to find the expression of $H_3$ by means of the Legendre's elliptic function.

\section{Proof of Theorems \ref{theorem:Gnintegral} and \ref{theorem:GncC}}\label{section:GnI}

In the following, the main goal is to give a proof for Theorem \ref{theorem:Gnintegral}. For doing this, we shall make use of the fact that $G_n$ can be viewed as solution to a Cauchy problem for which the initial condition is exactly $G_{n-1}$. This idea leads us to introduce an infinite order partial differential operator made up with  Euler-type vector fields. As in the Cauchy's theory for analytic functions, this differential operator can be represented by an integral;  see Proposition \ref{proposition:KuD} in \S\ref{subsection:GnIoperator}. In paragraphes \ref{subsection:GnIn-1} and \ref{subsection:GnIproof} we then  apply this representation to our couple of functions $(G_{n-1},G_n)$.

In \S\ref{subsection:GnIS}, we shall set out how to read Theorem \ref{theorem:GncC} as a corollary of Theorem \ref{theorem:Gnintegral}. We conclude this section by considering the analytic continuation domain of each $G_n$, by mean of Theorem \ref{theorem:Gnintegral}.

In whole this section, $n$ denotes an integer $\ge 2$,  $x=(x_1,x_2,...,x_n)$, $x'=\bar x_{n-1}'=(x_1,...,x_{n-1})$ and the following Euler operator and vector fileds will also be used:
\begin{equation}\label{equation:DD'}
\delta_{x_j}=x_j\,\frac{\partial\ }{\partial x_j},\quad
\cD=\delta_{x_1}+\dots+\delta_{x_n},\quad
\cD'=\cD-\delta_{x_n}.
\end{equation}

\subsection{An infinite order differential operator}\label{subsection:GnIoperator}
To any positive integer $k$, we associate the polynomial $P_k$ of degree $2k$ as follows:
\begin{equation}\label{equation:Ckj}
P_k(z)=\prod_{j=1}^k(z+j)^2=\sum_{j=0}^{2k}C_{k,j}z^j\in\ZZ_{\ge0}\,\,[z]\,.
\end{equation}
For convenience, we set $C_{k,j}=0$ if $j>2k$. Let $\cK(u,z)$ denote the following {\it modified generating function} associated to the double sequence $C_{k,j}$:
\begin{equation}\label{equation:Kuz}
\cK(u,z)=\sum_{k,j\ge 0}\frac{C_{k,j}}{k!^2}\,u^k\,z^j\,,
\end{equation}
that can be read as follows:
\begin{equation}\label{equation:K2F1}
\cK(u,z)=\sum_{k\ge0}\frac{P_k(z)}{k!^2}\,u^k={}_2F_1(z+1,z+1;1;u)\,.
\end{equation}
Thus, it follows that $\cK(u,z)$ can be extended into a holomorphic function in the domain $\bigl(\CC\setminus[1,+\infty)\bigr)\times\CC$ of $\CC^2$.

\begin{lemma}\label{lemma:Kuz}
For any compact $K\subset\CC\setminus[1,+\infty)$, there exist $C_K>0$ and $M_K>0$ making the following relation valid:
\begin{equation}\label{equation:Kuzestimates}
\vert \cK(u,z)\vert<C_K\,e^{M_K\,\vert z\vert},,\quad\forall (u,z)\in K\times\CC\,.
\end{equation}
\end{lemma}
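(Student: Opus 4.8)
The plan is to exploit the identity $\cK(u,z)={}_2F_1(z+1,z+1;1;u)$ already recorded in \eqref{equation:K2F1}, together with an Euler-type integral representation of the Gauss function that uses a loop around $t=1$ so as to remain valid for arbitrary $z\in\CC$. Below $C_K,M_K>0$ denote constants that depend only on $K$, not the same at each occurrence. The elementary remark about the compact $K\subset\CC\setminus[1,+\infty)$ is that $\{1/u:u\in K\}$ is a compact subset of $\CC$ disjoint from $[0,1]$ --- this is exactly the hypothesis $K\cap[1,+\infty)=\emptyset$ --- so that, choosing $\varepsilon\in(0,\tfrac12)$ small enough depending on $K$, the lollipop-shaped path $\gamma$ that runs along $[0,1-\varepsilon]$, turns once positively around $t=1$ on the circle $|1-t|=\varepsilon$, and returns along $[1-\varepsilon,0]$ --- the same kind of contour as in Theorem~\ref{theorem:Gnintegral} --- stays at positive distance from every $1/u$ with $u\in K$ and winds around none of them. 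Thus, for $t$ on $\gamma$ and $u\in K$, the points $1-t$ and $1-ut$ lie in fixed compact subsets of $\CC^*$, one has $|t|\le1+\varepsilon$ (with $0$ an endpoint of $\gamma$), and $\arg t$, $\arg(1-t)$, $\arg(1-ut)$ stay bounded along $\gamma$.

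The first step treats the range $\mathrm{Re}\,z\ge-\tfrac12$. Starting from Euler's integral for ${}_2F_1$ --- valid for our parameters in the strip $-1<\mathrm{Re}\,z<0$ --- and replacing $\int_0^1$ by $\int_\gamma$, analytic continuation in $z$ yields, for every $z$ with $\mathrm{Re}\,z>-1$,
\begin{equation*}
\cK(u,z)=-\frac{e^{\pi i z}}{2\pi i}\int_\gamma t^{z}(1-t)^{-z-1}(1-ut)^{-z-1}\,dt,
\end{equation*}
where the reflection formula has collapsed the $\Gamma$-prefactor of Euler's formula to $-e^{\pi i z}/(2\pi i)$ (since $c=1$, the factor $\Gamma(1-(c-b))$ produced by reflection equals $\Gamma(b)$ and cancels the denominator $\Gamma(b)$); note $|e^{\pi i z}|=e^{-\pi\,\mathrm{Im}\,z}\le e^{\pi|z|}$. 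I would then estimate the integrand factor by factor, writing $|t^{z}|=|t|^{\mathrm{Re}\,z}\,e^{-\mathrm{Im}\,z\,\arg t}$ and similarly for $(1-t)^{-z-1}$ and $(1-ut)^{-z-1}$: since along $\gamma$ the moduli $|1-t|$ and $|1-ut|$ stay in fixed positive intervals (with $|1-t|=\varepsilon$ on the turn) and all the arguments are bounded, one obtains a factor $e^{M_K(|z|+1)}$ multiplying the only genuinely $t$-singular factor $|t|^{\mathrm{Re}\,z}$ near $t=0$. That factor is integrable since $\mathrm{Re}\,z>-1$, with $\int_0^1|t|^{\mathrm{Re}\,z}\,dt=(\mathrm{Re}\,z+1)^{-1}\le2$ thanks to $\mathrm{Re}\,z\ge-\tfrac12$, while the turn at $t=1$ contributes a path of length $2\pi\varepsilon$ with bounded integrand. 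Putting this together gives $|\cK(u,z)|\le C_K\,e^{M_K|z|}$ on $K\times\{\mathrm{Re}\,z\ge-\tfrac12\}$.

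The second step reduces the range $\mathrm{Re}\,z<-\tfrac12$ to the first one by Euler's transformation ${}_2F_1(a,b;c;u)=(1-u)^{c-a-b}\,{}_2F_1(c-a,c-b;c;u)$, which for our parameters and the principal branch on $\CC\setminus[1,+\infty)$ reads
\begin{equation*}
\cK(u,z)=(1-u)^{-2z-1}\,\cK(u,-1-z).
\end{equation*}
As $\mathrm{Re}(-1-z)=-1-\mathrm{Re}\,z>-\tfrac12$, the first step applies to $\cK(u,-1-z)$ and gives $|\cK(u,-1-z)|\le C_K\,e^{M_K|1+z|}\le C_K\,e^{M_K}\,e^{M_K|z|}$; moreover $|(1-u)^{-2z-1}|\le e^{M_K(|z|+1)}$ uniformly for $u\in K$ because $1-u$ runs through a fixed compact subset of $\CC\setminus(-\infty,0]$. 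Multiplying the two bounds yields the statement for all $z\in\CC$.

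The routine parts are the branch-and-argument bookkeeping along $\gamma$ and the tracking of the $K$-dependence of the constants. The one genuinely delicate point --- and the reason for cutting at $\mathrm{Re}\,z=-\tfrac12$ --- is the behaviour of the Euler integral at its two endpoint singularities $t=0$ and $t=1$: the loop removes the obstruction at $t=1$ for every $z$, but the integral converges at $t=0$ only for $\mathrm{Re}\,z>-1$, and with a bound uniform in $|z|$ only when $\mathrm{Re}\,z$ is bounded away from $-1$, which is precisely what Euler's transformation repairs. (Alternatively one could estimate the convergent power series directly on $\{|u|<1\}$ --- using $|(z+1)_k/k!|\le\binom{|z|+k}{k}$ and splitting the sum near $k\approx|z|$ to get an $e^{O(|z|)}$ bound --- and then continue analytically through the hypergeometric ODE; but a naive Gronwall estimate on that second-order equation only gives $e^{O(|z|^2)}$, so a Liouville-type rescaling would be required to recover the linear rate, and the integral representation is cleaner.)
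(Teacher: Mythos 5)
Your proof is correct, and it fleshes out the second of the two routes the paper sketches: the paper's primary argument applies Kummer's (Pfaff's) transformation to get $\cK(u,z)=(1-u)^{-z-1}\,{}_2F_1\bigl(z+1,-z;1;\frac{u}{u-1}\bigr)$ and then cites Watson's asymptotic formula for ${}_2F_1$ with two large, oppositely shifted parameters, while mentioning in one sentence that one "can also make use of" the loop integral \eqref{equation:KuzS} — which is exactly your starting formula, since $\bigl(\cS(u;t)\bigr)^{z+1}\frac{dt}{t}=t^{z}(t-1)^{-z-1}(1-ut)^{-z-1}\,dt$. The genuine value you add over that one-line alternative is the treatment of $\Re z\le-1$: the loop removes the obstruction at $t=1$ for every $z$, but the integrand still behaves like $t^{z}$ at the base point $t=0$, so the representation \eqref{equation:KuzS} converges, with a bound uniform in $|z|$, only when $\Re z$ is bounded away from $-1$; your cut at $\Re z=-\tfrac12$ combined with Euler's transformation $\cK(u,z)=(1-u)^{-2z-1}\cK(u,-1-z)$ is precisely the right repair, and your collapse of the $\Gamma$-prefactor to $-e^{i\pi z}/(2\pi i)$ checks out. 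Two small points of bookkeeping: $\{1/u:u\in K\}$ is not compact if $0\in K$ (the correct and sufficient statement is that $\{ut:u\in K,\ t\in\gamma\}$ is a compact set avoiding $1$, which follows from $K\cap[1,+\infty)=\emptyset$); and the uniform boundedness of $\arg t$, $\arg(1-t)$, $\arg(1-ut)$ along $\gamma$ deserves the explicit line you promise it, since it is what converts $|w^{z}|$ into $|w|^{\Re z}e^{O(|z|)}$. As for what each approach buys: Watson's formula yields the sharp exponential rate $M_K$ (relevant if one wants the Laplace transform \eqref{equation:Cut} on an explicit half-plane), whereas your contour estimate is self-contained, needs no asymptotic machinery, and makes the analyticity in $u$ on all of $\CC\setminus[1,+\infty)$ transparent.
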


\begin{proof}
From Kummer's formula \cite[p.67, (3)]{Lu} and \eqref{equation:K2F1}, it follows that
$$
\cK(u,z)=(1-u)^{-z-1}\,{}_2F_1\bigl(z+1,-z;1;\frac u{u-1}\,\bigr)\,,
$$
to what the Watson's asymptotic formula \cite[p.237, (8)]{Lu} can be applied. In the same time, one can also make use of the expression \eqref{equation:KuzS} given latter in \S\ref{subsection:GnIproof} for $\cK(u,z)$. The details are left to the reader.
\end{proof}

In other words, the function $\cK(u,z)$ admits at most the first order exponential growth at $z=\infty$ when $u$ remains inside any relatively compact subset of the cut-plan $\CC\setminus[1,+\infty)$. Therefore, one can define its Laplace transform as follows:
\begin{equation}\label{equation:Cut}
\cC(u,\zeta)=\int_0^\infty \cK(u,z)\,e^{-z\zeta}\,dz\,,
\end{equation}
and this represents an analytic function at $(0,\infty)\in\CC\times\bigl(\CC\cup\{\infty\}\bigr)$.

\begin{proposition}\label{proposition:KuD}Let $\cD'$ be as in \eqref{equation:DD'} and let $\cK(u,\cD')$ be the infinite order differential operator obtained by replacing $z$ by $\cD'$ in \eqref{equation:Kuz}. Let ${\HH}_{n-1}$ and $\HH_n$ be  the space of germs of holomorphic functions at $x'=0\in\CC^{n-1}$ and at $x=(x',x_n)=0\in\CC^n$, respectively.
Then $\cK(x_n,\cD')$ is well-defined from $\HH_{n-1}$ to $\HH_n$  and, moreover,  for all $f\in\HH_{n-1}$, one can find $R>0$ such that the following integral representation holds near $x=0\in\CC^n$:
\begin{equation}\label{equation:KuDf}
\cK(x_n,\cD')f(x)=\frac1{2\pi i}\,\int_{\cL_R}f(e^\zeta\,x')\,\cC(x_n,\zeta)\,d\zeta\,,
\end{equation}
where $\cL_R$ denotes the closed-loop along the circle $\vert z\vert=R$ in the anti-clock sense.
\end{proposition}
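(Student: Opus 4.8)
The plan rests on the observation that $\cD'$ is diagonalised by monomials: since $\delta_{x_j}\,x'^\ell=\ell_j\,x'^\ell$, one has $\cD'\,x'^\ell=|\ell|\,x'^\ell$ and $(\cD')^j\,x'^\ell=|\ell|^j\,x'^\ell$, where $|\ell|=\ell_1+\dots+\ell_{n-1}\in\NN$. Reading \eqref{equation:Kuz} with $z$ replaced by $\cD'$, the operator $\cK(x_n,\cD')$ therefore acts term by term on a power series $f=\sum_\ell a_\ell\,x'^\ell$ by $\cK(x_n,\cD')f=\sum_\ell a_\ell\,\cK(x_n,|\ell|)\,x'^\ell$, with $\cK(x_n,m)={}_2F_1(m+1,m+1;1;x_n)$ by \eqref{equation:K2F1}. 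To see that this lands in $\HH_n$, I would fix once and for all a compact neighbourhood $K$ of $0$ inside $\CC\setminus[1,+\infty)$, take $C_K,M_K$ from Lemma \ref{lemma:Kuz} so that $|\cK(x_n,|\ell|)|\le C_K\,e^{M_K|\ell|}$ for all $x_n\in K$, $\ell\in\NN^{n-1}$, and combine this with the Cauchy bound $|a_\ell|\le C\,\rho^{-|\ell|}$ valid when $f$ converges on a polydisc $\{|x_j|<\rho,\ 1\le j\le n-1\}$: then $\sum_\ell|a_\ell\,\cK(x_n,|\ell|)\,x'^\ell|\le C\,C_K\sum_\ell\prod_j\bigl(e^{M_K}|x_j|/\rho\bigr)^{\ell_j}$ converges as soon as $|x_j|<\rho\,e^{-M_K}$ and $x_n\in K$, so $\cK(x_n,\cD')f\in\HH_n$.

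The analytic core of \eqref{equation:KuDf} is the identity
\[
\frac1{2\pi i}\int_{\cL_R}e^{m\zeta}\,\cC(x_n,\zeta)\,d\zeta=\cK(x_n,m)\qquad(m\in\NN),
\]
which I would establish for any circle $\cL_R=\{|\zeta|=R\}$ with $R>M_K$. Expand $\cK(x_n,z)=\sum_{j\ge0}b_j(x_n)\,z^j$ in powers of $z$; Cauchy's inequalities applied to \eqref{equation:Kuzestimates} give $|b_j(x_n)|\le C_K\,e^{M_K r}\,r^{-j}$ for every $r>0$, hence, optimising over $r$, $|b_j(x_n)\,j!|\le C'_K\,M_K^{\,j}$ up to a subexponential factor, uniformly in $x_n\in K$. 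Plugging this expansion into the definition \eqref{equation:Cut} of $\cC$ and using $\int_0^\infty z^j e^{-z\zeta}\,dz=j!\,\zeta^{-j-1}$, a term-by-term integration (legitimate for $\mathrm{Re}\,\zeta>M_K$ by the bounds just obtained) produces the Laurent series $\cC(x_n,\zeta)=\sum_{j\ge0}b_j(x_n)\,j!\,\zeta^{-j-1}$, which converges and is jointly holomorphic in $(x_n,\zeta)$ on $K\times\{|\zeta|>M_K\}$ and provides the analytic continuation of \eqref{equation:Cut} there (in particular $\cC(x_n,\infty)=0$). Multiplying by $e^{m\zeta}=\sum_{i\ge0}m^i\zeta^i/i!$ and extracting the coefficient of $\zeta^{-1}$ on $\cL_R$ annihilates every term except $i=j$, leaving $\sum_{j\ge0}(m^j/j!)\,b_j(x_n)\,j!=\sum_{j\ge0}b_j(x_n)\,m^j=\cK(x_n,m)$.

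Assembling the two pieces: keeping $K$, $M_K$ and $R>M_K$ fixed, for $f=\sum_\ell a_\ell\,x'^\ell\in\HH_{n-1}$ convergent on $\{|x_j|<\rho\}$ and for $x$ in the polydisc $U=\{\,|x_j|<\rho\,e^{-R}\ (1\le j\le n-1),\ x_n\in\mathrm{int}\,K\,\}$, the series $f(e^\zeta x')=\sum_\ell a_\ell\,e^{|\ell|\zeta}\,x'^\ell$ converges absolutely and uniformly for $\zeta\in\cL_R$, because there $|e^{|\ell|\zeta}\,x'^\ell|\le\prod_j\bigl(e^{R}|x_j|\bigr)^{\ell_j}$. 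Hence one may integrate the right-hand side of \eqref{equation:KuDf} term by term and apply the identity above with $m=|\ell|$:
\[
\begin{aligned}
\frac1{2\pi i}\int_{\cL_R}f(e^\zeta x')\,\cC(x_n,\zeta)\,d\zeta
&=\sum_\ell a_\ell\,x'^\ell\cdot\frac1{2\pi i}\int_{\cL_R}e^{|\ell|\zeta}\,\cC(x_n,\zeta)\,d\zeta\\
&=\sum_\ell a_\ell\,\cK(x_n,|\ell|)\,x'^\ell=\cK(x_n,\cD')f(x),
\end{aligned}
\]
which is \eqref{equation:KuDf}; and since the left-hand integrand is jointly holomorphic in $(x,\zeta)\in U\times\cL_R$, the left-hand side is holomorphic on $U$, re-proving $\cK(x_n,\cD')f\in\HH_n$. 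The step I expect to require the most care is the \emph{uniformity in the auxiliary variable $x_n$} of all the exponential-type and Cauchy estimates invoked above --- precisely the content of Lemma \ref{lemma:Kuz} --- together with the attendant justification of interchanging an infinite sum with the single compact contour integral; the diagonalisation of $\cD'$ on monomials is what makes the whole mechanism run.
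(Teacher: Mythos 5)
Your proof is correct and follows essentially the same route as the paper: both arguments rest on expanding $\cC(x_n,\zeta)$ for $\vert\zeta\vert$ large as the Laurent series $\sum_{k,j}C_{k,j}\,j!\,x_n^k/(k!^2\,\zeta^{j+1})$ and then evaluating the contour integral over $\cL_R$ by residues, the only organizational difference being that you decompose $f$ into monomials (using $\cD'x'^\ell=\vert\ell\vert\,x'^\ell$) where the paper writes $\cD'^jf(x')=\partial_\zeta^jf(e^\zeta x')\vert_{\zeta=0}$ and invokes Cauchy's formula for derivatives --- two readings of the same computation once $f(e^\zeta x')=\sum_\ell a_\ell e^{\vert\ell\vert\zeta}x'^\ell$ is expanded. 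A welcome bonus of your write-up is that it actually justifies the well-definedness of $\cK(x_n,\cD'):\HH_{n-1}\to\HH_n$ via Lemma \ref{lemma:Kuz} and Cauchy estimates on the Taylor coefficients, a point the paper's proof passes over in silence.
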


\begin{proof}Let $f\in\HH_{n-1}$, replace $(u,z)$ by $(x_n, \cD')$ in \eqref{equation:Kuz} and consider the obtained double power series that involves  $\cD'^jf(x')$. By induction on $j$, one may easily prove that
 $$\cD'^j f(x')= \bigl.\frac{\partial^j f(e^\zeta x')}{\partial \zeta^j} \bigr|_{\zeta=0}\,, \quad j\in\ZZ_{\ge 0}\,;$$
 therefore, from the Cauchy's formula, one finds that
$$
\cK(x_n,\cD')f(x')=\sum_{k,j\ge 0}\frac{C_{k,j}}{k!^2}\,x_n^k\,\frac{j!}{2\pi i}\,\int_{\cL_R}f(e^\zeta\,x')\,\frac{d\zeta}{\zeta^{j+1}}\,.
$$
Here, if we choose $R$ large  enough, then one can observe that the following equality holds for all $\zeta\in\CC$ satisfying $\vert \zeta\vert> R/2$:
$$
\sum_{k,j\ge 0}\frac{C_{k,j}\,j!}{k!^2}\,\frac{x_n^k}{\zeta^{j+1}}=\cC(x_n,\zeta
)\,,
$$
which allows us to permute $\sum$ and $\int$. Thus one  obtains the wanted expression \eqref{equation:KuDf}.
\end{proof}

\subsection{Passing from $G_{n-1}$ to $G_n$}\label{subsection:GnIn-1}
It is easy to check that the power series $G_n$ defined in \eqref{equation:Gnseries} satisfies the following partial differential equation:
\begin{equation}\label{Gn.PDE}
 \delta_{x_n}^2 G_n(x)=x_n\,(\cD+1)^2\, G_n(x),\quad
 G_n(x',0)=G_{n-1}(x')\,.
\end{equation}
 Notice that one can also get this PDE by the one established for $F_C$ in \cite[p. 117]{AK}, making all the parameters equal to $1$.

\begin{proposition}\label{proposition:GnGn-1} Consider the differential operator $\cK(x_n,\cD')$ defined in Proposition \ref{proposition:KuD}. Then it follows that
\begin{equation}\label{equation:GnK}
G_n(x',x_n)=\cK(x_n,\cD')G_{n-1}(x')
\end{equation}
for all $x=(x',x_0)$ close enough to $0\in\CC^n$.
\end{proposition}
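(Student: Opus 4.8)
The plan is to read \eqref{Gn.PDE} as a Cauchy problem whose holomorphic solution is uniquely determined by the datum $G_{n-1}$, and to observe that $\cK(x_n,\cD')$ is exactly the operator producing that solution. First I would expand $G_n(x',x_n)=\sum_{k\ge0}g_k(x')\,x_n^k$ with each $g_k$ a germ at $0\in\CC^{n-1}$ (legitimate since $G_n\in\HH_n$) and substitute into $\delta_{x_n}^2G_n=x_n(\cD+1)^2G_n$. Since $\cD=\cD'+\delta_{x_n}$ and $\delta_{x_n}$ merely reads off the exponent of $x_n$ — so that $(\cD+1)$ applied to $x_n^k\,h(x')$ equals $x_n^k(\cD'+k+1)h(x')$ — matching the coefficient of $x_n^k$ gives the recursion $k^2g_k=(\cD'+k)^2g_{k-1}$ for $k\ge1$, together with $g_0=G_{n-1}$ from the initial condition. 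Iterating yields $g_k=\frac1{k!^2}\prod_{j=1}^k(\cD'+j)^2\,G_{n-1}=\frac{P_k(\cD')}{k!^2}\,G_{n-1}$ with $P_k$ as in \eqref{equation:Ckj}, and summing over $k$ reproduces $\cK(x_n,\cD')G_{n-1}$ by \eqref{equation:Kuz}--\eqref{equation:K2F1}. This already establishes \eqref{equation:GnK} at the level of formal power series.

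Equivalently — and this is probably the cleaner way to organize the write-up — I would set $\tilde G_n:=\cK(x_n,\cD')G_{n-1}$, which by Proposition \ref{proposition:KuD} is a genuine element of $\HH_n$, and verify directly that it solves \eqref{Gn.PDE}. The initial condition $\tilde G_n(x',0)=G_{n-1}(x')$ is immediate because only the $k=0$ term of $\sum_k\frac{P_k(\cD')G_{n-1}}{k!^2}x_n^k$ survives at $x_n=0$ and $P_0\equiv1$. For the differential equation one computes $\delta_{x_n}^2\tilde G_n=\sum_{k\ge1}\frac{k^2}{k!^2}\,P_k(\cD')G_{n-1}\,x_n^k$, while $x_n(\cD+1)^2\tilde G_n=\sum_{k\ge0}\frac1{k!^2}(\cD'+k+1)^2P_k(\cD')G_{n-1}\,x_n^{k+1}$; reindexing $k\mapsto k-1$ turns the latter into $\sum_{k\ge1}\frac1{(k-1)!^2}(\cD'+k)^2P_{k-1}(\cD')G_{n-1}\,x_n^k$, and the algebraic identity $P_k(z)=(z+k)^2P_{k-1}(z)$ (read off from the product defining $P_k$) makes the two series coincide. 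Then \eqref{equation:GnK} follows from the uniqueness of the holomorphic solution of the Cauchy problem \eqref{Gn.PDE}.

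There is essentially no deep obstacle; the points demanding care are bookkeeping. One must keep the three operators $\delta_{x_n}$, $\cD'$ and $\cD=\cD'+\delta_{x_n}$ carefully apart, and one must be sure that the infinite-order operator $\cK(x_n,\cD')$ may be applied, rearranged and differentiated term by term — but this last point is precisely what Lemma \ref{lemma:Kuz} and Proposition \ref{proposition:KuD} secure, so all manipulations are on convergent power series and are legitimate. Finally, I would remark that the PDE can be bypassed entirely: extracting the coefficient of $x_n^k$ from \eqref{equation:Gnseries} gives $g_k(x')=\sum_{\ell'\in\NN^{n-1}}\bigl(\tfrac{(|\ell'|+k)!}{\ell'!\,k!}\bigr)^2(x')^{\ell'}$, and since $\cD'$ acts on the monomial $(x')^{\ell'}$ by multiplication by $|\ell'|$ one has $P_k(\cD')(x')^{\ell'}=\bigl(\tfrac{(|\ell'|+k)!}{|\ell'|!}\bigr)^2(x')^{\ell'}$, so $\frac{P_k(\cD')}{k!^2}G_{n-1}(x')=\sum_{\ell'}\frac1{k!^2}\bigl(\tfrac{(|\ell'|+k)!}{|\ell'|!}\bigr)^2\bigl(\tfrac{|\ell'|!}{\ell'!}\bigr)^2(x')^{\ell'}$, which is the same series; at bottom \eqref{equation:GnK} is the trivial factorization $\tfrac{(|\ell'|+k)!}{\ell'!\,k!}=\tfrac{(|\ell'|+k)!}{|\ell'|!\,k!}\cdot\tfrac{|\ell'|!}{\ell'!}$ read coefficient by coefficient.
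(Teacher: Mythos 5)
Your proof is correct and follows essentially the same route as the paper's: the paper likewise expands $G_n$ in powers of $x_n$ and identifies the coefficient of $x_n^k/k!^2$ with $\prod_{m=1}^k(\cD'+m)^2\,G_{n-1}$ through the same factorial identity $P_k(|\ell'|)=\bigl((|\ell'|+k)!/|\ell'|!\bigr)^2$ that closes your final remark. The Cauchy-problem/uniqueness packaging you offer as the ``cleaner'' write-up is a harmless reorganization of that same coefficient computation, with the term-by-term manipulations justified exactly as you say by Proposition \ref{proposition:KuD}.
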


\begin{proof}
From \eqref{Gn.PDE} one can deduce the following relation for any positive integer $k$:
\begin{equation}\label{Gnk.PDE}
 \bigl(\frac{1}{x_n}\delta_{x_n}^2\bigr)^k\, G_n(x)=\prod_{m=1}^k(\cD+m)^2\, G_n(x)\,.
\end{equation}
Indeed, let $j$ be an integer $\in[1,n]$;  from the expression \eqref{equation:Gnseries}, it follows that
\begin{equation}\label{equation:Gndelta}
\bigl(\frac{1}{x_j}\delta_{x_j}^2\bigr)^k G_n(x)=\sum_{\ell\in{\mathbb N}^n}\bigl(\frac{(\vert\ell\vert+k)!}
{\ell!}\bigr)^2\, x^{\ell}\,;
\end{equation}
thus one finds \eqref{Gnk.PDE} by direct computation, noticing the elementary relation $\cD x^\ell=\vert\ell\vert\,x^\ell$.
On can complete the proof of \eqref{equation:GnK} by rewriting the power series $G_n$ as follows:
\begin{equation}\label{Gn.n-1.formal}
G_n(x',x_n)=\sum_{k=0}^\infty  \sum_{\ell\in{\mathbb N}^{n-1}}\bigl(\frac{(\vert\ell\vert+k)!}
{\ell!}\bigr)^2\, x'^\ell \,\frac{x_n^k}{k!^2}\,.
\end{equation}
We omit the details.
\end{proof}

\begin{remark}\label{remark:GnGn-1}
From \eqref{Gn.n-1.formal}, one may notice the following alternative representations of $G_n$ in terms of $G_{n-1}$:
$$
G_n(x',x_n)=\sum_{k=0}^\infty \bigl(\frac{1}{x_j}\delta_{x_j}^2\bigr)^k\, G_{n-1}(x') \,\frac{x_n^k}{k!^2}=\sum_{k=0}^\infty \prod_{m=1}^k(\cD'+m)^2\, G_{n-1}(x') \,\frac{x_n^k}{k!^2}\,,
$$
where $j$ denotes any integer $\in [1,n-1]$.
\end{remark}

\subsection{Proof of Theorem \ref{theorem:Gnintegral}}\label{subsection:GnIproof}
By taking into account Propositions \ref{proposition:KuD} and \ref{proposition:GnGn-1},  it follows that, for all $x=(x',x_n)$ sufficiently near $0\in\CC^n$,
\begin{equation}\label{Gn.rec.DL}
G_n(x)=\frac 1{2\pi i}{\int_{\cL_R} G_{n-1}(e^\zeta\, x')\, \cC(x_n,\zeta)\,d\zeta}\,,
\end{equation}
where $\cL_R$ denotes a positive-oriented circle $\vert\zeta\vert=R$ with $R$ sufficiently large.
We will express $C(x_n,\zeta)$ as an integral and then prove Theorem \ref{theorem:Gnintegral} by permuting the obtained double integral.

Recall the following Euler's type integral formula for ${}_2F_1$ \cite[p. 58, (6)]{Lu}:
\[
{}_2F_1(a,b;c;u)= \frac{i\Gamma(c) e^{i\pi (a-c)}}{2\Gamma(a)\Gamma(c-a) \sin\pi(c-a)}
\int_{\cC_{(0,1+)}} t^{a-1}(1-t)^{c-a-1}(1-t\,u)^{-b}dt,
\]
where $\Re (b)>0$, $c-a\notin\ZZ_{\ge 0}$, $|\arg(1-u)|<\pi$ and
where the contour $\cC_{(0,1+)}$ goes from zero towards 1 and then comes back to zero after a positive round around 1.
Therefore, in view of \eqref{equation:K2F1}, $\cK(u,z)$ can be rewritten as follows:
\begin{equation}\label{equation:KuzS}
\cK(u,z)= \frac{1}{2\pi i}\int_{\cC_{(0,1+)}} \bigl(\cS(u;t)\bigr)^{z+1}\,\frac{dt}t\,,
\end{equation}
where $\cS(u;t)$ is the transformation  defined in \eqref{equation:Sa} with $a=u$. Notice that the integral contour $\cC_{(0,1+)}$ can be replaced by a loop $\cC_1$ as in Theorem \ref{theorem:GncC}, what we are going to do in the following.

Consequently, the Laplace transform \eqref{equation:Cut} can be decoded as  a double integral. Applying Fubini's Theorem to this one gives raise to the following relation:
$$
\cC(u,\zeta)=\frac 1{2\pi i}\,\int_{\cC_1} \cS(u;t)\,\frac{dt}t\,\int_0^\infty \bigl(\cS(u;t)\,e^{-\zeta}\bigr)^{z}\,dz\,.
$$
Remember that $\cC(u,\zeta)$ represents a germ of analytic function at $(0,\infty)$ in $\CC\times\CC_\infty$, with $\CC_\infty=\CC\cup\{\infty\}$. Let ${\bf D}_R=\{u\in\CC:\vert u\vert<R\}$; for any sufficiently small  $R>0$, if $(u,t)\in {\bf D}_R\times\cC_1$ and if $\cC_1$ remains enough close to $t=1$, then $\cS(u,t)\sim1/(1-t)$, thus one may suppose that $\arg\bigl(e^{i\pi}\,\cS(u,t)\bigr)\in(-\pi,\pi]$. Therefore,  one finds the following expression:
\begin{equation}\label{equation:Cutintegral}
\cC(u,\zeta)=\frac 1{2\pi i}\,\int_{\cC_1} \frac{\cS(u;t)}{\zeta-\log\bigl( \cS(u;t)\bigr)}\,\frac{dt}t\,,
\end{equation}
where $\log$ denotes the principal branch of the logarithm.

Finally, if one puts both $u=x_n$ and the expression \eqref{equation:Cutintegral} in \eqref{Gn.rec.DL} and then applies again Fubini's Theorem to this new double integral, one findsz that
$$
G_n(x',x_n)=-\frac 1{4\pi^2}\,\int_{\cC_1}\cS(x_n;t)\, \frac{dt}t\int_{\cL_R}\frac{G_{n-1}(e^\zeta\,x')}{\zeta-\log\bigl( \cS(x_n;t)\bigr)}\,d\zeta\,.
$$ Thus from the Cauchy's formula one obtains the wanted relation \eqref{equation:Gnintegral} and this is the end of the proof of Theorem \ref{theorem:Gnintegral}.\hfill $\Box$

\begin{remark}\label{remark:GnI}In the same way, one can prove that the function $\cK(x_n,\cD')f$ considered in Proposition \ref{proposition:KuD} can be read as follows:
$$
\cK(x_n,\cD')f(x)=\frac1{2\pi i}\int_{0}^{(1+)}f(S(x_n;t)x')\,\cS(x_n;t)\,\frac{dt}t\,.
$$
\end{remark}

 Notice that one may replace in the relation \eqref{equation:Gnintegral} the integration contour by any smooth closed-loop homotopic to $\partial^+D(1;1/2)$ in $\CC\setminus\{1\}$; such loop will be denoted as $\cC_1$ as in  Theorem \ref{theorem:GncC}.

\subsection{Proof of Theorem \ref{theorem:GncC}}\label{subsection:GnIS}
We proceed to the induction on $n$. If $n=1$, it is elementary to check  that
\begin{equation}\label{equation:G1}
G_1(x)=\frac1{1-x}=\frac 1{2\pi i}\int_{\cC_1}\frac{dt}{(1-t)(tx-1)}\,,
\end{equation}
so that the wanted relation \eqref{equation:GnS} is true for $n=1$.

From the expression \eqref{equation:Gnintegral}, one may write that, if $x=(x', x_n)$ remains enough close to $0$ in $\CC^n$, then:
$$
G_n(x)=\frac 1{2\pi i}\int_{\cC_1}G_{n-1}\bigl(\cS(x_n;t_n)\,x'\bigr)\,\cS(x_n;t_n)\,\frac{dt_n}{t_n}\,.
$$
Assume Theorem \ref{theorem:GncC} to be true for $m=n-1$, $n\ge 2$; it follows that
$$
G_n(x)=\frac {1}{(2\pi i)^n}\,\int_{\cC_1}\cS(x_n;t_n)\,\frac{dt_n}{t_n}\int_{\cC_1^{n-1}}\prod_{j=1}^{n-1}\cS(\bar Y_j(x_n,t_n);\bar t_j)\frac{dt_j}{t_j}\,,
$$
where we set
$$
\bar Y_j(x_n,t_n)=\cS(x_n;t_n)\,\bar x_j,\quad
1\le j\le n-1.
$$
Therefore, one can complete the proof with the help of the definition \eqref{equation:Sam} for the transformations $\cS(\bar x_j;\bar t_j)$.
\hfill$\Box$

\subsection{Analytic continuation domain of $G_n$}\label{subsection:GnIAC}
For all positive integer $n$, let $\Omega_n$ be the analytic continuation domain of $G_n$. We shall make use of Theorem \ref{theorem:Gnintegral} to set out how to describe $\Omega_n$ from $\Omega_{n-1}$. If $n=1$. it is clear that $\Omega_1=\CC\setminus\{1\}$. For all integer $n\ge 2$, it will be convenient to introduce the following notations: to any $a\in\CC$ is associated the set $\cC_{(a)}$ of all smooth closed-loops that are homotopic to $\partial D^+(1;r)$ in $\CC\setminus\{1,1/a\}$, where $r$ denotes any (infinitesimal) real $>0$; for any $\Gamma\in\cC_{(a)}$ we denote by $\cU_{n-1}^{a;\Gamma}$  the set of all $x'\in\CC^{n-1}$ such that $\cS(a;t)x'\in\Omega_{n-1}$ for all $t\in\Gamma$; finally, we define
\begin{equation}\label{equation:cU}
\cU_{n-1}^a=\cup_{\Gamma\in\cC_{(a)}}\cU_{n-1}^{a;\Gamma},\quad
\cU_n=\cup_{a\in\CC}\bigl(\{a\}\times\cU^a_{n-1}\bigr)
\end{equation}
and, their respective connected component containing ${\bf 0}$ will be denoted by $\cU_{n-1;{\bf 0}}^a$ and $\cU_{n;{\bf 0}}$, respectively.

\begin{proposition}\label{proposition:GnIAC}The following assertions hold for all integer $n\ge 2$.
\begin{enumerate}
\item \label{item:GnIACOmega} $\Omega_n=\cU_{n;{\bf 0}}=\cup_{a\in\CC}\cU_{n;{\bf 0}}^{a}$.
\item \label{item:GnIAC0}$\cU_{n-1}^0=\Omega_{n-1}$.
\item \label{item:GnIAC1a}If $a\not=0$, then $\cU_{n-1}^a=a\cU_{n-1}^{1/a}$ and $\cU_{n-1;{\bf 0}}^a=a\cU_{n-1;{\bf 0}}^{1/a}$.
\item \label{item:GnIACK}For any given compact set $K\subset\CC$, it follows that
$$
\cap_{a\in K}\cU_{n-1}^{a}\not=\emptyset\,,\quad
\cap_{a\in K}\cU_{n-1;{\bf 0}}^{a}\not=\emptyset\,.
$$
\end{enumerate}
\end{proposition}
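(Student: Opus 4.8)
The plan is to prove the four assertions in the order (3), (2), (4), (1): the first three are bookkeeping with the transformations $\cS(a;t)$ and the loop classes $\cC_{(a)}$, whereas assertion (1), the genuine description of $\Omega_{n}$, is a contour--deformation argument that invokes the other three fibrewise. For assertion (3) I would start from the elementary identity $\cS(1/a;1/t)=a\,\cS(a;t)$, obtained by substituting $s=1/t$ in \eqref{equation:Sa} and simplifying. Rewriting it as $\cS(a;t)\,x'=\cS(1/a;s)\,(x'/a)$ with $s=1/t$, and noting that $t\mapsto1/t$ is a homeomorphism of $\CC^{*}$ fixing $1$ and exchanging $a\leftrightarrow1/a$, one checks that it carries $\cC_{(a)}$ bijectively onto $\cC_{(1/a)}$; a representative loop may first be pushed off $t=0$, which is harmless since $\cS(a;0)\,x'=\mathbf 0\in\Omega_{n-1}$ and $\Omega_{n-1}$ is open. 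Hence $x'\in\cU_{n-1}^{a;\Gamma}$ if and only if $x'/a\in\cU_{n-1}^{1/a;\,1/\Gamma}$; taking unions over $\Gamma$ gives $\cU_{n-1}^{a}=a\,\cU_{n-1}^{1/a}$, and since $x'\mapsto x'/a$ is a homeomorphism fixing $\mathbf 0$ it also matches the distinguished components.

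For assertion (2), take $a=0$: then $\cS(0;t)=t/(t-1)$ is the M\"obius involution fixing $0$ and exchanging $1$ with $\infty$, and $\cC_{(0)}$ consists of the loops of winding number $+1$ about $t=1$. The inclusion $\Omega_{n-1}\subseteq\cU_{n-1}^{0}$ is immediate on taking $\Gamma=\{\,|t|=R\,\}$ with $R$ large, so that $\cS(0;t)\,x'=\bigl(1+\tfrac1{t-1}\bigr)x'$ stays inside $\Omega_{n-1}$ along $\Gamma$; for the reverse inclusion I would transport the admissible loop through $w=\cS(0;t)$, so that a loop about $t=1$ becomes a loop about $w=\infty$ and $x'$ falls on the bounded side of $\cS(0;\Gamma)\subset\Omega_{n-1}$, whence $x'\in\Omega_{n-1}$ from the connectedness of $\Omega_{n-1}$ as a continuation domain. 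Assertion (4) is then soft: $\cC_{(a)}\neq\emptyset$ for every $a$, and $\cS(a;t)\,\mathbf 0=\mathbf 0\in\Omega_{n-1}$ identically, so $\mathbf 0\in\cU_{n-1}^{a}$ --- in fact $\mathbf 0\in\cU_{n-1;\mathbf 0}^{a}$ --- for every $a$; therefore $\mathbf 0$ lies in $\cap_{a\in K}\cU_{n-1}^{a}$ and in $\cap_{a\in K}\cU_{n-1;\mathbf 0}^{a}$ for any $K$, in particular any compact one. If one wants a common open neighbourhood of $\mathbf 0$, one chooses $\Gamma_{a}\in\cC_{(a)}$ depending continuously on $a\in K$, sets $M_{K}=\sup_{a\in K,\ t\in\Gamma_{a}}|\cS(a;t)|<\infty$, and then every polydisc about $\mathbf 0$ of radius $<\rho/M_{K}$, with $B(\mathbf 0;\rho)\subset\Omega_{n-1}$, is contained in all $\cU_{n-1}^{a}$, $a\in K$.

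For assertion (1), Theorem \ref{theorem:Gnintegral} --- equivalently Theorem \ref{theorem:GncC} in its $\cC_{1}$ form --- gives $G_{n}(x)=\frac1{2\pi i}\int_{\cC_{1}}G_{n-1}\bigl(\cS(x_{n};t)\,x'\bigr)\,\cS(x_{n};t)\,\frac{dt}{t}$ near $\mathbf 0$. I would characterise $\Omega_{n}$ as the set of $x$ reachable from $\mathbf 0$ along a path over which $\cC_{1}$ admits a continuous admissible deformation --- one that misses, at all times, the moving poles $\{\,t=1,\ t=1/x_{n}\,\}$ and the moving branch locus $\{\,t:\cS(x_{n};t)\,x'\notin\Omega_{n-1}\,\}$. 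This set is exactly $\cU_{n;\mathbf 0}$, because a continuous family of admissible contours over a path is precisely a path in $\cU_{n}$ issuing from $\mathbf 0$. The inclusion $\cU_{n;\mathbf 0}\subseteq\Omega_{n}$ is then the standard "carry the contour along the path" argument, using continuity of the integral in the parameter together with Morera's theorem. The reverse inclusion $\Omega_{n}\subseteq\cU_{n;\mathbf 0}$ is where the real work lies: if $x\in\Omega_{n}$ admitted no admissible deformation, then along every path to $x$ the contour would eventually be pinched --- a pole and a branch point, or two branch points, colliding and trapping it --- and a genuine pinch forces a singularity of the parametric integral there, contradicting $x\in\Omega_{n}$. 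Finally $\cU_{n;\mathbf 0}=\cup_{a}\cU_{n;\mathbf 0}^{a}$ follows by slicing $\cU_{n;\mathbf 0}$ over $x_{n}=a$: by assertions (2)--(3) applied fibrewise each slice is a union of $\mathbf 0$--components of $\cU_{n-1}^{a}$, and assertion (4) together with the spine $\{\,(a,\mathbf 0):a\in\CC\,\}$ shows all these slices lie in one connected piece.

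The main obstacle is this pinch step. One must show that the obstruction to deforming the contour is not merely apparent but is forced by a genuine pinch configuration of the integrand, and that such a configuration really does obstruct the analytic continuation of the integral; this is where the theory of singularities of parameter-dependent integrals is needed. A secondary, more mechanical, difficulty is the homotopy bookkeeping --- which punctures matter, the irrelevance of $t=0$, orientations --- required to match the combinatorial object $\cU_{n}$ with the analytic continuation, together with the degenerate behaviour at $a=0$ and near $a=1$, where $1/a$ escapes to $\infty$ or collides with $1$.
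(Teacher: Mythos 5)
Your handling of assertions (3) and (4) is sound, and in one place better than the paper's. For (3) you use exactly the identity the paper invokes, $\cS(1/a;1/t)=a\,\cS(a;t)$ (the second relation of \eqref{equation:sigmaat}), together with the check that $t\mapsto 1/t$ carries $\cC_{(a)}$ onto $\cC_{(1/a)}$; this is the paper's argument with the bookkeeping (the puncture exchange $1/a\leftrightarrow a$, the harmlessness of $t=0$, the matching of the components of ${\bf 0}$) made explicit. For (4) the paper deduces the claim from $\cap_{a\in K}\cC_{(a)}\not=\emptyset$, i.e.\ from the existence of a single loop admissible for every $a\in K$; that claim is delicate and can fail: for $K=\{a:\vert a-1\vert\le 1/2\}$ the points $1/a$ fill the closed disc $\vert w-\frac43\vert\le\frac23$, which contains $1$ in its interior, so no loop can wind once around $1$ while winding zero times around every $1/a$. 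Your argument sidesteps this entirely: since $\cS(a;t)\,{\bf 0}={\bf 0}\in\Omega_{n-1}$ and $\cC_{(a)}\not=\emptyset$ for each individual $a$, the point ${\bf 0}$ lies in every $\cU^{a}_{n-1}$ and every $\cU^{a}_{n-1;{\bf 0}}$, which already gives both non-emptiness statements with no common contour needed.

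The genuine gaps are in the two reverse inclusions, which are precisely the assertions the paper dismisses as ``evident''. In (2), the step ``$x'$ falls on the bounded side of $\cS(0;\Gamma)\subset\Omega_{n-1}$, whence $x'\in\Omega_{n-1}$ by connectedness'' is not a valid inference: a point enclosed by a loop contained in an open connected set need not belong to that set ($\Omega_1=\CC\setminus\{1\}$ and any loop around $1$ already show this). Worse, with the definitions read literally the inclusion $\cU^{0}_{n-1}\subseteq\Omega_{n-1}$ fails for $n=2$: $\cS(0;t)\cdot 1=t/(t-1)$ never takes the value $1$, so $x'=1$ belongs to $\cU^{0;\Gamma}_{1}$ for every admissible $\Gamma$, while $1\notin\Omega_1$. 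Any correct proof of (2) therefore has to build more into $\cU^{a;\Gamma}_{n-1}$ than the pointwise condition on $\Gamma$ (for instance admissibility of an entire homotopy back to the base configuration near ${\bf 0}$), and your argument does not supply this. In (1), the inclusion $\cU_{n;{\bf 0}}\subseteq\Omega_n$ by carrying the contour along a path is standard and fine, but the converse $\Omega_n\subseteq\cU_{n;{\bf 0}}$ is, as you acknowledge, an unproved pinch-type statement --- that every obstruction to deforming the contour forces an actual singularity of the continued function. You have located the difficulty correctly but not removed it, and the paper offers nothing to compare against here, since it proves only (3) and (4) and declares (1) and (2) evident.
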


\begin{proof}
The assertions \ref{item:GnIACOmega} and \ref{item:GnIAC0} are evident.

To prove the assertion \ref{item:GnIAC1a}, it suffices to consider the relation $\cS(1/a;t)=a\cS(a;1/t)$.

The last assertion comes from the fact that $\cap_{a\in K}\cC_{(a)}\not=\emptyset$.
\end{proof}

\begin{corollary}\label{corollary:GnIAC}
The intersection set $\Omega_{n}\cap\bigl(\CC^*\bigr)^n$ is left invariant by the group $\GG_n$.
\end{corollary}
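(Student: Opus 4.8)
The plan is to reduce the assertion to a single generator of $\GG_n$ and then to read the invariance off from the recursive description of $\Omega_n$ given by Proposition \ref{proposition:GnIAC}. First I would reduce to the transformation $T_{n,n}$: the group $\GG_n$ is generated by the involutions $T_{n,j}$ of \eqref{equation:Tj} and contains every coordinate permutation, and a direct check from \eqref{equation:Tj} gives $T_{n,j}=\tau_{jn}\circ T_{n,n}\circ\tau_{jn}$, where $\tau_{jn}\in\GG_n$ is the transposition of the $j$-th and $n$-th coordinates. Hence any subset of $\bigl(\CC^*\bigr)^n$ stable under all coordinate permutations and under $T_{n,n}$ is stable under $\GG_n$. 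Coordinate permutations fix $0$ and leave the symmetric power series $G_n$ of \eqref{equation:Gnseries} invariant, so they leave $\Omega_n$ invariant; and $\bigl(\CC^*\bigr)^n$ is $\GG_n$-stable by inspection. Thus it suffices to show $T_{n,n}\bigl(\Omega_n\cap\bigl(\CC^*\bigr)^n\bigr)\subseteq\Omega_n\cap\bigl(\CC^*\bigr)^n$, the reverse inclusion following automatically since $T_{n,n}$ is an involution.

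Next I would exploit Proposition \ref{proposition:GnIAC}. Writing $x=(x',a)$ with $a=x_n$ and $x'=(x_1,\dots,x_{n-1})$, one has $T_{n,n}(x',a)=(x'/a,1/a)$ for $a\ne0$. By the definitions \eqref{equation:cU} and assertion (1) of Proposition \ref{proposition:GnIAC}, whether a point $x$ with $x_n=a\ne0$ lies in $\Omega_n$ is controlled by the fibre $\cU^{a}_{n-1}$ and its distinguished component $\cU^{a}_{n-1;{\bf 0}}$. The identity $\cS(1/a;t)=a\,\cS(a;1/t)$ for $\cS$ as in \eqref{equation:Sa} --- exactly what proves assertion (3) of the same Proposition, namely $\cU^{a}_{n-1}=a\,\cU^{1/a}_{n-1}$ and $\cU^{a}_{n-1;{\bf 0}}=a\,\cU^{1/a}_{n-1;{\bf 0}}$ --- then shows that $x'$ lies in the relevant slice at height $a$ if and only if $x'/a$ lies in the slice at height $1/a$. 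In other words $T_{n,n}$ carries $\Omega_n\cap\{x_n\ne0\}$ onto itself, and intersecting with $\bigl(\CC^*\bigr)^n$ gives the reduced claim.

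The hard part will be the bookkeeping of connected components: $\Omega_n$ is by definition the component $\cU_{n;{\bf 0}}$ of $\cU_n$ through the origin, while $T_{n,n}$ does not fix the origin (it sends $x_n\to0$ to $x_n\to\infty$), so one must check that $T_{n,n}$ does not carry $\cU_{n;{\bf 0}}$ onto another component --- equivalently, that the slice of $\cU_{n;{\bf 0}}$ over each $a\ne0$ is precisely $\cU^{a}_{n-1;{\bf 0}}$. If assertion (1) of Proposition \ref{proposition:GnIAC} is understood to assert exactly this, nothing further is needed. Otherwise I would argue by induction on $n$: the base case $n=1$ is the trivial fact that $\Omega_1\cap\CC^*=\CC^*\setminus\{1\}$ is stable under $x\mapsto1/x$, and the inductive step uses assertion (4) of Proposition \ref{proposition:GnIAC} (that $\bigcap_{a\in K}\cU^{a}_{n-1;{\bf 0}}\ne\emptyset$ for every compact $K\subset\CC$), together with the inductive hypothesis that $\Omega_{n-1}\cap\bigl(\CC^*\bigr)^{n-1}$ is $\GG_{n-1}$-stable, to link the slice at height $a$ with that at height $1/a$ inside one connected component. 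A posteriori the Corollary is also immediate from the quasi-invariance $T_{n,j}G_n=-x_j\,G_n$ of Section \ref{section:Sym}, since $x_j$ is nowhere zero on $\bigl(\CC^*\bigr)^n$ and hence $G_n$ continues analytically across $T_{n,j}$ wherever it is already defined; but the argument above avoids that forward reference.
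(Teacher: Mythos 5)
Your proof is correct and follows essentially the same route as the paper, whose own proof is just the one-line instruction to combine assertions (1) and (3) of Proposition \ref{proposition:GnIAC}; you spell out the reduction to $T_{n,n}$ via coordinate permutations and the identification of the slice of $\Omega_n$ over $a\ne0$ with $\cU^{a}_{n-1;{\bf 0}}$, both of which the paper leaves implicit. The component bookkeeping you flag is exactly what assertion (1) (read as $\Omega_n=\cup_{a}\{a\}\times\cU^{a}_{n-1;{\bf 0}}$) together with the second half of assertion (3) is there to supply, so no further induction is needed.
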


\begin{proof}
It suffices to consider Proposition \ref{proposition:GnIAC} and especially its assertions \ref{item:GnIACOmega} and \ref{item:GnIAC1a}.
\end{proof}

\begin{remark}\label{remark:GnIAC}
In the above, the reasoning was made for each domain $\Omega_n$ that was supposed to be in $\CC^n$. However, it is natural to consider some spaces of loops or covering spaces instead of $\CC^n$, that should be studied in a future work. For $n=2$ and $n=3$, see Remarks \ref{remark:G2extension} and \ref{remark:G3Raccati}.
\end{remark}

\section{Symmetries of $G_n$ and $H_n$}\label{section:Sym}

This section is devoted to the study of the invariance of each function $H_n$  under the action of $\GG_n$, as it is announced in Theorem \ref{theorem:Hnsymmetry}. Remember that $H_n=G_n\,\sqrt{Q_n}$, where the factor $Q$ is a symmetric quadratic polynomial; see \eqref{equation:HnQn} and \eqref{equation:TjQn}. Since all the permutations of the coordinates belong to $\GG_n$,  one needs only to consider $G_n$ under the action of anyone of the $n$ transformations $T_{n,j}$ once the index $n$ is given.

\begin{theorem}\label{theorem:GnTn} The following relation holds for any given pair of positive integers $(n,j)$ such that $j\le n$:
\begin{equation}\label{equation:GnTnj}
T_{n,j}\,G_n(x)=- x_j\, G_n(x).
\end{equation}
\end{theorem}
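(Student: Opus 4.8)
The plan is to prove \eqref{equation:GnTnj} by induction on $n$, using the integral recursion of Theorem~\ref{theorem:Gnintegral} together with a well-chosen change of integration variable. The base case $n=1$ is immediate: $G_1(x)=1/(1-x)$ and $T_{1,1}x=1/x$, so $T_{1,1}G_1(x)=1/(1-1/x)=-x/(1-x)=-x\,G_1(x)$. For the inductive step, fix $n\ge 2$, assume \eqref{equation:GnTnj} holds in dimension $n-1$ for the transformation $T_{n-1,j}$ (and recall that $G_{n-1}$ is symmetric in its variables, so the relation holds for every index), and consider $T_{n,j}$ acting on $G_n$. By symmetry of $G_n$ in $x_1,\dots,x_n$ it suffices to treat two cases: $j=n$ and, say, $j=1$ (a generic index $1\le j\le n-1$). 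In each case I would substitute $x\mapsto T_{n,j}x$ in the formula
\[
G_n(x)=\frac1{2\pi i}\int_0^{(1+)}G_{n-1}\Bigl(\frac{t\,x'}{(t-1)(1-x_nt)}\Bigr)\,\frac{dt}{(t-1)(1-x_nt)}
\]
and then perform a M\"obius change of variable $t\mapsto \varphi(t)$ designed so that the argument $\cS(a;t)\,x'$ of $G_{n-1}$ is transformed into the argument of $G_{n-1}$ appearing in $G_n(x)$ precomposed with a $T_{n-1,k}$, at which point the inductive hypothesis produces the required factor $-x_j$.

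Concretely, for $j=n$ one has $T_{n,n}(x)=(x'/x_n,1/x_n)$, so the integrand becomes $G_{n-1}\bigl(\cS(1/x_n;t)\,x'/x_n\bigr)$ times $dt/\bigl((t-1)(1-t/x_n)\bigr)$. Using $\cS(1/x_n;t)=x_n\,\cS(x_n;1/t)$ (the identity already exploited in the proof of Proposition~\ref{proposition:GnIAC}\,\eqref{item:GnIAC1a}) the argument simplifies to $\cS(x_n;1/t)\,x'$; substituting $t\mapsto 1/t$ turns the contour around $t=1$ into another admissible loop $\cC_1$ and, after tracking the Jacobian $dt/t^2$ and the remaining rational factors, returns exactly $-x_n$ times the integral representing $G_n(x)$. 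For a generic index $j\le n-1$ the transformation $T_{n,j}$ leaves $x_n$ essentially in the role of an inert parameter but rescales $x'$ by $1/x_j$ and inverts $x_j$; here I would write $T_{n,j}x=(T_{n-1,j}x'\cdot(\text{scalar}),\,x_n\cdot(\text{scalar}))$, pull the scalar through $\cS$ using its homogeneity $\cS(a;t)(\lambda w)=\lambda\,\cS(a;t)w$ in the vector argument, and then invoke the $(n-1)$-dimensional identity $T_{n-1,j}G_{n-1}(\cdot)=-x_j\,G_{n-1}(\cdot)$ under the integral sign, the factor $-x_j$ being independent of $t$ and hence pulling out of the integral.

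The main obstacle I anticipate is bookkeeping of the branch of the square root / the admissible homotopy class of the contour $\cC_1$ under these M\"obius substitutions: the map $t\mapsto 1/t$ (and the other substitutions) moves the pole at $t=1/x_n$ and may move the base point $0$, so one must check that the image contour is still homotopic in $\CC\setminus\{1,1/x_n\}$ to $\partial^+D(1;r)$ and that no spurious residue at $t=0$ or at $t=\infty$ is picked up. This is exactly the kind of analytic-continuation subtlety that Proposition~\ref{proposition:GnIAC} and Corollary~\ref{corollary:GnIAC} are set up to control (they already assert that $\Omega_n\cap(\CC^*)^n$ is $\GG_n$-invariant), so I would lean on that corollary to guarantee that both sides of \eqref{equation:GnTnj} are genuinely defined and analytic on a common $\GG_n$-invariant domain, and then verify the identity first for $x$ near a convenient real point where all contours and branches are transparent, extending to the whole domain by analytic continuation. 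An alternative, cleaner route — which I would present as a remark or second proof — is to use instead the Euler-operator representation $G_n(x',x_n)=\cK(x_n,\cD')G_{n-1}(x')$ of Proposition~\ref{proposition:GnGn-1} together with the commutation relations between the vector fields $\delta_{x_j}$ and the involutions $T_{n,j}$, reducing \eqref{equation:GnTnj} to the already-established $n=2$ or $n=3$ cases plus an algebraic identity for $\cK$; this avoids contour gymnastics entirely but requires care with the infinite-order operator $\cK(x_n,\cD')$.
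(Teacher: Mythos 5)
Your treatment of the case $j=n$ is sound and is essentially the proof the paper gives in \S\ref{subsection:Symintegral}: one writes $G_n$ as $\int_0^{(1+)}g_n(x,t)\,\frac{dt}{t}$ with $g_n(x,t)=\frac1{2\pi i}G_{n-1}(\cS(x_n;t)x')\,\cS(x_n;t)$, exploits the two symmetries $\cS(a;t)=\cS(a;\frac1{at})$ and $\cS(a;t)=\frac1a\cS(\frac1a;\frac1t)$, and matches the two resulting contours (around $t=1$ and around $t=1/x_n$) via the decay of $g_n(x,t)/t$ at infinity; your anticipated ``main obstacle'' is exactly the point the paper settles by showing $\int_{|t|=R}g_n(x,t)\frac{dt}{t}\to0$. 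No induction on $n$ is actually used in this branch, and none is needed.

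The branch for a generic index $j\le n-1$, however, contains a genuine gap. Writing $T_{n,j}x=(T_{n-1,j}x',\,x_n/x_j)$, the integrand becomes $G_{n-1}\bigl(s(t)\,T_{n-1,j}x'\bigr)$ with $s(t)=\cS(x_n/x_j;t)$, and your plan is to commute the scalar with $T_{n-1,j}$ and apply the inductive hypothesis. But $s\cdot T_{n-1,j}(x')\ne T_{n-1,j}(s\,x')$: solving $s\cdot T_{n-1,j}(x')=T_{n-1,j}(w)$ forces $w_j=x_j/s(t)$ and $w_i=x_i$ for $i\ne j$, so the inductive hypothesis produces the factor $-w_j=-x_j/s(t)$, which is \emph{not} independent of $t$; after it cancels against the outer factor $s(t)$ you are left with $-x_j\cdot\frac1{2\pi i}\int G_{n-1}(x_1,\dots,x_j/s(t),\dots,x_{n-1})\,\frac{dt}{t}$, whose integrand is not that of $G_n(x)$ (only the $j$-th coordinate is deformed, rather than all of $x'$ being rescaled by $\cS(x_n;t)$), so the reduction does not close. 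Fortunately this whole branch is superfluous: since $G_n$ is symmetric and $T_{n,j}=\sigma_{jn}\circ T_{n,n}\circ\sigma_{jn}$ for the transposition $\sigma_{jn}$ of the $j$-th and $n$-th coordinates, the identity for $j=n$ immediately yields $G_n(T_{n,j}x)=G_n(T_{n,n}\sigma_{jn}x)=-(\sigma_{jn}x)_nG_n(\sigma_{jn}x)=-x_jG_n(x)$. This is precisely the remark the paper makes at the opening of Section \ref{section:Sym}; with it, your $j=n$ argument already proves the theorem. (Your suggested alternative via $\cK(x_n,\cD')$ is the paper's other proof, in \S\ref{subsection:SymEuler}, carried out for $j=1$ by a double induction on $n$ and on the power $k$.)
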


Two different proofs of this statement will be given in the two foolowing paragraphes. In \S\ref{subsection:SymEuler}, we shall consider the alternative representations of $G_n$ given in Remark \ref{remark:GnGn-1}; accordingly, the corresponding identity \eqref{equation:GnTnj} will be interpreted as consequence of some symmetries of the Euler vector fields $\cD'$.

 In \S\ref{subsection:Symintegral}, the starting point is to make use of the integral representation \eqref{equation:Gnintegral}. It will be shown that Theorem \ref{equation:GnTnj} comes from some very simple symmetries of the quadratic transformation $\cS(a;t)$.

 Finally, a few words will be added in \S\ref{subsection:SymHn} for finding, from Theorem \ref{theorem:GnTn}, the invariance of $H_n$ under the action of the group $\GG_n$.

\subsection{Symmetries viewed from Euler vector fields}\label{subsection:SymEuler} Let $T_n:=T_{n,1}$ and recall that $T_n(x)=(T_{n-1}\,x',x_n/x_1)$, where we suppose that $x_1\not=0$. A key point for what we are going to do in this paragraph is  the following relation :
\begin{equation}\label{dt1}
\delta_{x_1}\,(T_{n}f(x))=\delta_{x_1}f(1/x_1,x_2/x_1,\dots,x_{n}/x_1) =-T_{n}(\cD f(x))\,,
\end{equation}
where $f$ denotes any function which is assumed to be holomorphic in a domain $\Omega\subset \CC^{*n} $ such that $T_{n}\Omega=\Omega$.

\begin{proof}[Proof of \eqref{equation:GnTnj} for $j=1$] The identity is obviously true for $n=1$, since $G_1(x)=1/(1-x)$. If $n=2$, we saw in Paragraph \ref{subsection:ALG2} that $G_2(x)=1/\sqrt{Q_2(x)}$, so that the stated result is already established.

We shall prove the general result by induction on $n$. Suppose the identity \eqref{equation:GnTnj} is true for $j=1$ and $m=n-1\ge 1$, where $n$ denotes some {\it fixed} integer $\ge 2$.
For all integer $k\ge 0$, we define
$$
\gamma_k(x')=\bigl(\frac{1}{x_1}\delta_{x_1}^2\bigr)^{k} G_{n-1}(x'),\quad
\gamma_k^*(x')=T_{n-1}\,\gamma_k(x')\,;
$$
By taking into account Remark \ref{remark:GnGn-1},  the expected relation \eqref{equation:GnTnj} for $j=1$ is equivalent to the following relations for $k\in {\mathbb N}$ and $x'\in\CC^{*n-1}$:
\begin{equation}\label{equation:gammak}
\gamma_k^*(x')
=- x_1^{k+1}\,\gamma_k(x')\,.
\end{equation}
This is true for $k=0$, because of the induction hypothesis on $G_{n-1}$; therefore, it remains only to prove \eqref{equation:gammak} for all $k\ge 1$ and we shall do it by induction on $k$ in the following.

Indeed, suppose $ \gamma_{k-1}^*(x')
=- x_1^{k}\,\gamma_{k-1}(x')
$ for some integer $k\ge 1$; it follows that
\begin{eqnarray}\label{equation:delta1k}
\delta_{x_1}\gamma_{k-1}^*(x')=k T_{n-1}\,\gamma_{k-1}(x')- x_1^{k}\delta_{x_1}\gamma_{k-1}(x').
\end{eqnarray}
On the other hand, applying \eqref{dt1} to $T_{n-1}\gamma_{k-1}$ instead of $T_nf$ gives raise to the relation $\delta_{x_1}\gamma_{k-1}^*(x') =-T_{n-1}(\cD' \gamma_{k-1}(x'))$; thus, from \eqref{equation:delta1k} one finds that
\begin{equation}\label{equation:delta1k1}
x_1^{k}\delta_{x_1}\gamma_{k-1}(x')=T_{n-1}((\cD' +k)\gamma_{k-1}(x'))\,.
\end{equation}

If one applies again \eqref{dt1} to the left-hand side of \eqref{equation:delta1k1} with $x'$ instead of $x$, one may find  that
\begin{equation}\label{deltak1}
\delta_{x_1}\left(x_1^{k}\delta_{x_1}\gamma_{k-1}(x')\right)
=-T_{n-1}(\cD'(\cD'+k)\gamma_{k-1}(x'))\,.
\end{equation}
In the same time, combing the relation \eqref{equation:delta1k1} with
\begin{eqnarray*}
\delta_{x_1}\left(x_1^{k}\delta_{x_1}\gamma_{k-1}(x')\right)=k x_1^{k}\delta_{x_1}\gamma_{k-1}(x')+x_1^{k} \delta_{x_1}^2 \gamma_{k-1}(x'),
\end{eqnarray*}
gives raise to the following:
\begin{equation}\label{deltak2}
\delta_{x_1}\left(x_1^{k}\delta_{x_1}\gamma_{k-1}(x')\right)=k T_{n-1}(\c(\cD'+k)\gamma_{k-1}(x'))+x_1^{k+1} \gamma_k(x'),
\end{equation}
where the relation $\delta_{x_1}^2 \gamma_{k-1}(x')=x_1\,\gamma_k(x')$ was used.

Finally, one can deduce from \eqref{deltak1} and \eqref{deltak2} that
\[
x_1^{k+1} \gamma_k(x')=-T_{n-1}(\cD'+k)^2\gamma_{k-1}(x')\,,
\]
which implies the relation \eqref{equation:gammak} with the help of \eqref{Gnk.PDE}. Thus one completes the proof of  \eqref{equation:GnTnj} for $j=1$.
\end{proof}

\subsection{Symmetries of the quadratic transformation $\cS(a;t)$}\label{subsection:Symintegral}
Let $\cS(a;t)$ be the transformation given in \eqref{equation:Sa}; it follows that
\begin{equation}\label{equation:Sadifference}
\cS(a;t)=\frac1{a-1}\,\bigl(\frac1{1-t}-\frac1{1-at}\bigr)\,,
\end{equation}
so that one can easily find the following relations:
\begin{equation}\label{equation:sigmaat}
\cS(a;t)=\cS\bigl(a;\frac1{at}\bigr),\quad \cS(a;t)=\frac1a\,\cS\bigl(\frac1a;\frac1t\bigr)\,.
\end{equation}

\begin{proof}[Proof of \eqref{equation:GnTnj} for $j=n$] Let $x=(x',x_n)\in\CC^n$,  $x_n\not=0$  and define
\begin{equation}\label{equation:gn}
g_n(x,t)=\frac 1{2\pi i}\,G_{n-1}\bigl(\cS(x_n;t)\,x'\bigr)\,\cS(x_n;t);
\end{equation}
then the integral representation \eqref{equation:Gnintegral} can be read as follows:
\begin{equation}\label{equation:Gngn}
G_n(x)=\int_0^{(1+)}g_n(x,t)\,\frac{dt}t\,.
\end{equation}
By taking into account both relations of \eqref{equation:sigmaat}, one deduces respectively that
\begin{equation}\label{equation:gnat}
g_n(x,t)=g_n\bigl(x,\frac1{x_nt}\bigr),\quad
g_n(x,t)=\frac1{x_n}\,g_n\bigl(T_n'\,x,\frac1t\bigr)\,,
\end{equation}
where $T_n'=T_{n,n}$ is given by $T_n'(x)= (x'/x_n,1/x_n)$.

By combining \eqref{equation:Gngn} with \eqref{equation:gnat}, we find finally that
\begin{equation}\label{equation:Gnat}
G_n(x)=-\int_\infty^{(\frac1{x_n}+)}g_n(x,t)\,\frac{dt}t
\end{equation}
and
\begin{equation}\label{equation:Gn1t}
G_n(x)=-\frac1{x_n}\,\int_\infty^{(1+)}g_n(T_n'x,t)\,\frac{dt}{t}\,.
\end{equation}

As $R\to+\infty$, one can easily see that
$$
\int_{\vert t\vert=R}g_n(x,t)\,\frac{dt}t\to0\,,
$$
so that the following relation holds:
$$
\int_\infty^{(1+)}g_n(x,t)\frac{dt}t=-\int_\infty^{(\frac1{x_n}+)}g_n(x,t)\,\frac{dt}t\,.
$$
Thus one can deduce \eqref{equation:GnTnj} from the relations \eqref{equation:Gnat} and \eqref{equation:Gn1t}.
\end{proof}

\begin{remark}\label{remark:Sym}
Such relation as \eqref{equation:GnTnj} exists for all function that can be expressed by an integral of the type $\displaystyle \int f(x,t)\frac{dt}t$ if $f$ be defined as in \eqref{equation:gn} by replacing $G_{n-1}$ by some suitable function.
\end{remark}

\subsection{Invariance of the function $H_n$ by the group $\GG_n$}\label{subsection:SymHn}
Remember that $H_n(x)=\sqrt{Q_n(x)}\,G_n(x)$; see \eqref{equation:HnQn}.
From the relation \eqref{equation:TjQn}, it follows that
$$
T_{n,j}\sqrt{Q_n(x)}=\pm\frac{\sqrt{Q_n(x)}}{x_j}\,.
$$
Moreover, by a standard argument of continuation process, one can see that the symbol ``$\pm$" can be only ``-" , so that one gets $T_{n,j}H_n(x)=H(x)$ from \eqref{equation:GnTnj}.
\hfill$\Box$

\begin{remark}\label{remark:SymHn}
By taking into account Theorem \ref{theorem:G2}, relation \eqref{equation:G1} and the above-established invariance for $H_n$, the proof of Theorem \ref{theorem:Hnsymmetry} will be completed once one will have expressed the function $G_3$ in terms of the Legendre's elliptic function of the first kind.
\end{remark}

\section{Expressions of $G_2$ and $G_3$ deduced from Euler integrals}\label{section:G23}

As it is shown in \eqref{equation:G1},  letting $G_0=1$ in \eqref{equation:Gnintegral} gives rise to  the sum function of the corresponding power series \eqref{equation:Gnseries} with $n=1$.
The main object of this section is devoted to deducing an explicit expression  for each of $G_2$ and $G_3$ by starting from \eqref{equation:Gnintegral}.

Remember that an algebraic expression is found for $G_2$ in \S\ref{subsection:ALG2}, by making use of general transformations of Appell's series. In \S\ref{subsection:G23G2} that follows, this expression will be reformulated by means of the discriminant of a quadratic polynomial.

Putting this algebraic expression of $G_2$ into \eqref{equation:Gnintegral} would allow one to find an {\it explicit} expression for $G_3$: this is what will be shown in \S\ref{subsection:G23G3} and \S\ref{subsection:G23G3L}. In fact, $G_3$ is nearly associated to the Legendre's elliptic function of the first kind; see Theorem \ref{theorem:G3}. This ends finally the proof of Theorem \ref{theorem:Hnsymmetry}.

\subsection{Alternative proof of Theorem \ref{theorem:G2}}\label{subsection:G23G2}
Putting \eqref{equation:G1} in the left hand side of \eqref{equation:Gnintegral} gives raise to the following expression :
$$
G_2(x_1,x_2)=\frac i{2\pi}\int_0^{(1+)}\frac{dt}{(1-t)(1-x_2t)+x_1t}\,.
$$
Write
$$
(1-t)(1-x_2t)+x_1t=x_2(t-t_+)(t-t_-)\,,
$$
where
$$
t_\pm=\frac{1-x_1+x_2\pm\sqrt{(1-x_1+x_2)^2-4x_2}}{2x_2}\,.
$$
If $(x_1,x_2)\to(0,0)$, one observes that
$$
t_+\to\infty\,,\qquad
t_-\to 1,
$$
so that the last integral for $G_2(x)$ may be evaluated  by applying the residues Theorem at $t=t_-$. By observing that
$$
x_2\,\bigl(t_--t_+\bigr)=-\sqrt{(1-x_1+x_2)^2-4x_2}
$$
and
$$
(1-x_1+x_2)^2-4x_2=(1-x_1-x_2)^2-4x_1\,x_2
$$
one obtains the wanted expression \eqref{equation:G2}.
\hfill$\Box$

\begin{remark}\label{remark:G23G2}
By a suitable change of variables, the integral representation formula \eqref{equation:GnS} for $n=2$ can be formally put in the same form as the Aomoto's formula for the Appell's series $F_4$; see \cite[(0.4)]{Yo1} or \cite[p. 124, Example 3.1]{AoK}. This processus remains only formal, since the Aomoto's formula is valid uniquely  when the parameters $\alpha$, $\beta$, $\nu_1$ and $nu_2$ do not belong to $\ZZ$.
\end{remark}

\subsection{Elliptic integral related to $G_3$}\label{subsection:G23G3}
By making use of the expression \eqref{equation:G2}, the integral representation \eqref{equation:Gnintegral} with $n=3$ can be written as follows:
\begin{equation}\label{equation:G30}
G_3(x)=\frac i{2\pi}\int_0^{(1+)}\omega(x;t)\qquad(x=(x_1,x_2,x_3))\,,
\end{equation}
where
\begin{equation}\label{equation:omegaP}
\omega=\frac{dt}{\sqrt{P(x;t)}}
\end{equation}
with
$$
P(x;t)=\bigl((1-t)(1-x_3t)+(x_1+x_2)t\bigr)^2-4x_1x_2t^2\,.
$$
The differential form $\omega(x;t)$ will be considered
over the Riemann surface of $t\mapsto\sqrt{P(x;t)}$, where $x=(x_1,x_2,x_3)$ will be considered as small complex parameter.

By choosing any determination of $\sqrt{x_1x_2}$ in the Riemann surface of $\sqrt z$, we set
$$
a_{\pm}=1-x_1-x_2+x_3\pm2\sqrt{x_1\,x_2}\,,
$$
$$
t_1=\frac{a_+-\sqrt{a_+^2-4\,x_3}}{2\,x_3}\,,\quad
t_2=\frac{a_--\sqrt{a_-^2-4\,x_3}}{2\,x_3}
$$
and
$$
t_3=\frac{a_++\sqrt{a_+^2-4\,x_3}}{2\,x_3}\,,\quad
t_4=\frac{a_-+\sqrt{a_-^2-4\,x_3}}{2\,x_3}\,.\quad
$$
A direct computation allows us to obtain that
\begin{equation}\label{equation:Pt1234}
P(x;t)=x_3^2\,(t-t_1)\,(t-t_2)\,(t-t_3)\,(t-t_4)\,.
\end{equation}

Let $Q=Q_3$ be as in \eqref{equation:HnQn} with $n=3$, that means that
\begin{equation}\label{equation:Q}
Q(x)=\bigl(1-(x_1+x_2+x_3)\bigr)^2-4(x_1\,x_2+x_2\,x_3+x_3\,x_1)\,;
\end{equation}
let
\begin{equation}\label{equation:u}
u(x)=\frac{64\,x_1\,x_2\,x_3}{Q(x)^2}\,.
\end{equation}

\begin{lemma}\label{lemma:aa}
The following relation holds:
\begin{equation}\label{equation:aax3}
(a_+^2-4\,x_3)\,(a_-^2-4\,x_3)=Q(x)^2\bigl(1-u(x)\bigr)\,.
\end{equation}

\end{lemma}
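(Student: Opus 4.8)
The plan is to reduce \eqref{equation:aax3} to a polynomial identity in $x_1,x_2,x_3$ and to verify it by a short computation, the essential preliminary step being to rewrite $Q(x)$ in a convenient form.

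First I would introduce the abbreviation $s=1-x_1-x_2+x_3$, so that $a_\pm=s\pm 2\sqrt{x_1x_2}$ and therefore
\[
a_+a_-=s^2-4x_1x_2,\qquad a_+^2+a_-^2=2\bigl(s^2+4x_1x_2\bigr).
\]
Expanding the left-hand side of \eqref{equation:aax3} as $(a_+a_-)^2-4x_3(a_+^2+a_-^2)+16x_3^2$ then gives
\[
(a_+^2-4x_3)(a_-^2-4x_3)=(s^2-4x_1x_2)^2-8x_3\,s^2-32\,x_1x_2x_3+16x_3^2 .
\]

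Next I would establish the $n=3$ analogue of the elementary identity $(1-x_1+x_2)^2-4x_2=(1-x_1-x_2)^2-4x_1x_2$ that was used in the alternative proof of Theorem \ref{theorem:G2} in \S\ref{subsection:G23G2}, namely
\[
Q(x)=s^2-4x_1x_2-4x_3 .
\]
This follows by writing $s=(1-x_1-x_2-x_3)+2x_3$ and expanding $s^2$: the cross term $4x_3(1-x_1-x_2-x_3)$ together with $4x_3^2-4x_3$ collapses to $-4x_1x_3-4x_2x_3$, which is precisely what is needed to turn $(1-x_1-x_2-x_3)^2-4x_1x_2$ into $Q(x)=(1-x_1-x_2-x_3)^2-4(x_1x_2+x_2x_3+x_3x_1)$. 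Since by the definition \eqref{equation:u} one has $Q(x)^2\bigl(1-u(x)\bigr)=Q(x)^2-64\,x_1x_2x_3$, I would then expand $Q(x)^2=(s^2-4x_1x_2-4x_3)^2$ and subtract $64\,x_1x_2x_3$, obtaining exactly $(s^2-4x_1x_2)^2-8x_3\,s^2-32\,x_1x_2x_3+16x_3^2$; comparing with the formula displayed above finishes the proof.

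There is no real obstacle here: the whole argument is elementary algebra. The only point worth isolating is the reformulation $Q(x)=s^2-4x_1x_2-4x_3$, which both shortens the computation and clarifies why the numbers $a_\pm=1-x_1-x_2+x_3\pm 2\sqrt{x_1x_2}$ — coming from the discriminant of the quadratic $(1-t)(1-x_3t)+(x_1+x_2)t$ in the integrand of \eqref{equation:G30} — are the natural quantities to work with, since $a_+a_-=Q(x)+4x_3$.
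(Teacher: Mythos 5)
Your proof is correct and follows essentially the same route as the paper: both arguments hinge on the identity $a_+a_-=Q(x)+4x_3$ (your reformulation $Q(x)=s^2-4x_1x_2-4x_3$) together with $a_+^2+a_-^2=2a_+a_-+16x_1x_2$, the paper merely packaging the final step as $(a_+^2-4x_3)(a_-^2-4x_3)=(a_+a_--4x_3)^2-64x_1x_2x_3$ instead of expanding both sides and comparing.
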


\begin{proof}
From the definition of $a_\pm$, one can find that
\begin{equation}\label{equation:aa}
a_+\,a_-=(1-x_1-x_2+x_3)^2-4\,x_1\,x_2=Q(x)+4\,x_3\,.
\end{equation}
In the same time, it is easy to see that
$$
a_+^2+a_-^2=2\,a_+\,a_-+16\,x_1\,x_2\,,
$$
so that one obtains finally that
$$
(a_+^2-4\,x_3)\,(a_-^2-4\,x_3)=(a_+\,a_--4\,x_3)^2-64\,x_1\,x_2\,x_3\,.
$$
This ends the proof of \eqref{equation:aax3}
\end{proof}

Consider the differential form $\omega$ of \eqref{equation:omegaP}. In order to put $\omega$ into a so-called Riemann's normal form
(see \cite[p. 10]{RL}), we consider  the Mobius transform:
$$
t\ \mapsto\ t'=\frac{(t-t_1)(t_2-t_3)}{(t-t_3)(t_2-t_1)}\,,
$$
which sends $t_1$, $t_2$ and $t_3$ into $0$, $1$ and $\infty$, respectively.

\begin{proposition}\label{proposition:omega} Let $Q$ and $u$ be given in \eqref{equation:Q} and \eqref{equation:u}, respectively. Then the differential form $\omega$ given in \eqref{equation:omegaP} can be put into the following form:
\begin{equation}\label{equation:omegat'}
\omega=\frac{dt'}{\sqrt{\mu\,\tilde P(x;t')}}\,,\quad \tilde P(x;t')=t'\,(1-t')\,(1-\lambda\ t')\,,
\end{equation}
where
\begin{equation}\label{equation:lambda}
\lambda=\frac{1-\sqrt{1-u(x)}}{1+\sqrt{1-u(x)}}
\end{equation}
and
\begin{equation}\label{equation:mu}
\mu=-\frac{Q(x)}{2}\,\bigl(1+\sqrt{1-u(x)}\,\bigr)\,.
\end{equation}
\end{proposition}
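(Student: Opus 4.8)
The plan is to perform the classical reduction of the elliptic integral $\int\omega$ — whose quartic $P(x;t)$ has the four finite branch points $t_1,t_2,t_3,t_4$ of \eqref{equation:Pt1234} — to Riemann's normal form by means of the indicated Möbius transform sending $t_1,t_2,t_3$ to $0,1,\infty$, and then to identify the resulting modulus $\lambda$ and scale $\mu$ in closed form with the help of Lemma \ref{lemma:aa}.

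First I would set $k=\frac{t_2-t_3}{t_2-t_1}$, so that the substitution reads $t'=k\,\frac{t-t_1}{t-t_3}$ and inverts to $t=\frac{t_3 t'-k t_1}{t'-k}$. A direct computation then gives
\begin{equation*}
t-t_1=\frac{(t_3-t_1)\,t'}{t'-k},\quad
t-t_2=-\frac{(t_3-t_2)(1-t')}{t'-k},\quad
t-t_3=\frac{k(t_3-t_1)}{t'-k},\quad
t-t_4=-\frac{(t_3-t_4)(1-\lambda t')}{\lambda(t'-k)},
\end{equation*}
together with $dt=-\frac{k(t_3-t_1)}{(t'-k)^2}\,dt'$, where $\lambda$ is defined as the reciprocal of the image of $t_4$, i.e. $\lambda=\frac{(t_4-t_3)(t_2-t_1)}{(t_4-t_1)(t_2-t_3)}$. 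Substituting these into $\omega=\frac{dt}{\sqrt{P(x;t)}}$ with $P(x;t)=x_3^2\prod_{i=1}^4(t-t_i)$, all powers of $(t'-k)$ cancel and one is left with $\omega=\frac{dt'}{\sqrt{\mu\,t'(1-t')(1-\lambda t')}}$, where, using the identity $\frac{t_3-t_2}{k}=t_1-t_2$,
\begin{equation*}
\mu=\frac{x_3^2(t_3-t_2)(t_3-t_4)}{k\,\lambda}=\frac{x_3^2(t_1-t_2)(t_3-t_4)}{\lambda}\,.
\end{equation*}
This already has the shape \eqref{equation:omegat'}, so it remains only to evaluate $\lambda$ and $\mu$.

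To do so I would abbreviate $d=a_+-a_-=4\sqrt{x_1x_2}$ and $s_\pm=\sqrt{a_\pm^2-4x_3}$, so that from the definitions of $t_1,\dots,t_4$ one reads off
\begin{equation*}
t_4-t_3=\frac{-d-(s_+-s_-)}{2x_3},\quad
t_2-t_1=\frac{-d+(s_+-s_-)}{2x_3},\quad
t_4-t_1=\frac{-d+(s_++s_-)}{2x_3},\quad
t_2-t_3=\frac{-d-(s_++s_-)}{2x_3}.
\end{equation*}
Hence $\lambda=\frac{d^2-(s_+-s_-)^2}{d^2-(s_++s_-)^2}$ and $(t_1-t_2)(t_3-t_4)=\frac{d^2-(s_+-s_-)^2}{4x_3^2}$. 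Now $s_+^2+s_-^2=a_+^2+a_-^2-8x_3=(a_+-a_-)^2+2a_+a_--8x_3=d^2+2Q(x)$ by \eqref{equation:aa}, while $s_+s_-=\sqrt{(a_+^2-4x_3)(a_-^2-4x_3)}=Q(x)\sqrt{1-u(x)}$ by \eqref{equation:aax3}. Therefore $d^2-(s_+\mp s_-)^2=-2Q(x)\bigl(1\mp\sqrt{1-u(x)}\bigr)$, whence
\begin{equation*}
\lambda=\frac{1-\sqrt{1-u(x)}}{1+\sqrt{1-u(x)}},\qquad
\mu=\frac{x_3^2\cdot\bigl(-\tfrac{Q(x)}{2x_3^2}\bigr)\bigl(1-\sqrt{1-u(x)}\bigr)}{\lambda}=-\frac{Q(x)}{2}\bigl(1+\sqrt{1-u(x)}\bigr),
\end{equation*}
which are exactly \eqref{equation:lambda} and \eqref{equation:mu}.

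The only genuinely delicate point is the bookkeeping of the several square roots involved: the one defining the Riemann surface of $\sqrt{P(x;t)}$, the determinations of $\sqrt{x_1x_2}$, $s_+$, $s_-$, and the square roots produced while pulling $\omega$ through the Möbius transform. I would handle this by fixing all branches through continuity near $x=0$: there $t_1,t_2$ are close to $1$ and $t_3,t_4$ are close to $\infty$, so every quantity above has an unambiguous leading behaviour, which pins the signs down consistently and makes the displayed relations genuine equalities (and not merely equalities up to sign) in a neighbourhood of the origin.
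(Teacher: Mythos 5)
Your proposal is correct and follows essentially the same route as the paper: the same M\"obius transform sending $t_1,t_2,t_3$ to $0,1,\infty$, the same cross-ratio definition of $\lambda$, and the same use of \eqref{equation:aa} and Lemma \ref{lemma:aa} to evaluate the symmetric combinations of the $t_i$ in terms of $Q$ and $u$ (your $\mu=x_3^2(t_1-t_2)(t_3-t_4)/\lambda$ is identical to the paper's $\mu=x_3^2(t_2-t_3)(t_4-t_1)$). Your explicit tracking of the four factors $t-t_i$ and the closing remark on fixing branches by continuity near $x=0$ are just a more detailed write-up of the same argument.
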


 \begin{proof}
 Consider the factorization of the polynomial $P(x;t)$ given in \eqref{equation:Pt1234}. The above-mentioned Mobius transform $t\mapsto t'$ takes $t_4$ to $1/\lambda$ if one denotes by $\lambda$ the cross-ratio of $t_1$, $...$, $t_4$, this means, if
 $$
 \lambda=\frac{(t_4-t_3)(t_2-t_1)}{(t_4-t_1)(t_2-t_3)}\,.$$
 On the other hand, from the definition of $t_1$, ..., $t_4$, it  follows that
 $$
 (t_4-t_1)\,(t_2-t_3)=\frac1{4x_3^2}\,\bigl(-2a_+\,a_-+8\,x_3-2\sqrt{(a_+^2-4\,x_3)(a_-^2-4x_3}\,\bigr)\,;
 $$
 applying Lemma \ref{lemma:aa} and relation \eqref{equation:aa} gives raise to the following relation:
\begin{equation}\label{equation:t41}
 2\,x_3^2\,(t_4-t_1)\,(t_2-t_3)=-Q(x)\,\bigl(1+\sqrt{1-u(x)}\,\bigr)\,.
\end{equation}
By the same way, one can find that
 $$
 2\,x_3^2\,(t_4-t_3)\,(t_2-t_1)=-Q(x)\,\bigl(1-\sqrt{1-u(x)}\,\bigr)\,.
 $$
 Thus one obtains the expected expression \eqref{equation:lambda} for $\lambda$.

In order to obtain the polynomial $\tilde P(x;t')$ of \eqref{equation:omegat'} and the factor $\mu$ given in \eqref{equation:mu}, we will firstly observe that
\begin{equation}\label{equation:mut}
\mu=(t_2-t_3)\,(t_4-t_1)\,x_3^2\,.\end{equation}
Indeed, an elementary calculation permits to notice that the expression \eqref{equation:Pt1234} can be written as follows:
$$
P(x;t)=x_3^2\,\beta\,\frac{t'(1-t')(1-\lambda\,t')}{(t'-\alpha)^4}\,,
$$
where we set
$$
\alpha=\frac{t_2-t_3}{t_2-t_1}\,,\quad
\beta=(t_2-t_3)\,(t_4-t_1)\,(t_3-t_1)^2\,\alpha^2\,.
$$
Moreover, it is easy to see that
$$
dt=\frac{(t_1-t_3)\,\alpha}{(t'-\alpha)^2}\,dt'\,,
$$
so that one can find finally the expression \eqref{equation:omegat'} for which the factor $\mu$ is given by the relation
$$
\mu=\frac{x_3^2\,\beta}{(t_1-t_3)^2\,\alpha^2}\,;
$$
thus we find the expression \eqref{equation:mut}.

Finally, by taking into account the relation \eqref{equation:t41}, we obtain the factor $\mu$ in the form of \eqref{equation:mu} which satisfies the condition \eqref{equation:omegat'}; thus one completes the proof of proposition \ref{proposition:omega}.
\end{proof}

\subsection{Expression of $G_3$ by means of Legendre's elliptic function}\label{subsection:G23G3L} Coming back to Proposition \ref{proposition:omega}, one may find that the following limit behavior holds as $x\to 0\in\CC^3$:
\begin{equation}\label{equation:t1234limit}
t_1,\ t_2\to 1\,;\qquad
t_3,\ t_4\to\infty\,
\end{equation}
and
\begin{equation}\label{equation:lambdamulimit}
\lambda\to 0,\quad
\mu\to-1.
\end{equation}
Accordingly, for any $x$ enough near to $0\in\CC^3$, the contour of the integral \eqref{equation:G30} can be replaced by a closed loop linked $t_1$ and $t_2$. Therefore,  from proposition \ref{proposition:omega} it follows that
\begin{equation}\label{equation:G3t'}
G_3(x)=\frac i{\pi}\int_0^1\frac{dt'}{\sqrt{\mu\,t'\,(1-t')\,(1-\lambda\ t')}}\,,
\end{equation}
where we replaced the closed contour by the interval $(0,1)$ that is taken two times.

In view of \eqref{equation:lambdamulimit}, letting $x=0$ in \eqref{equation:G3t'} leads us to the following identity:
$$
1=\frac i\pi\,\frac1{\sqrt{-1}}\,\int_0^1\frac{dt'}{\sqrt{t'\,(1-t')}}\,,
$$
which suggests us to take $\sqrt{-1}\,=i$; in other words,  we will make use of the following relation, deduced from \eqref{equation:mu}:
\begin{equation}\label{equation:-mu}
\frac i{\sqrt\mu}\,=\frac{\sqrt2}{\sqrt {Q(x)}}\,\bigl(1+\sqrt{1-u(x)}\,\bigr)^{-1/2}
\end{equation}
as well as $x$ remains sufficiently near to $0$ in $\CC^3$.

\begin{theorem}\label{theorem:G3}
Let $Q$ and $u$ be given in \eqref{equation:Q} and \eqref{equation:u}, respectively. For all $x=(x_1,x_2,x_3)$ sufficiently close to $0\in\CC^3$, the following relation holds:
\begin{equation}\label{equation:G3}
G_3(x)=\frac1{\sqrt{Q(x)}}\,{}_2F_1\bigl(\frac14,\frac34;1;u(x)\bigr)\,.
\end{equation}
\end{theorem}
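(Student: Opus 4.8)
The plan is to start from the integral \eqref{equation:G3t'} and reduce it to a standard elliptic integral whose value is a Gauss hypergeometric function. First I would rewrite \eqref{equation:G3t'} using the relation \eqref{equation:-mu}, so that the prefactor becomes $\sqrt2\,Q(x)^{-1/2}(1+\sqrt{1-u(x)})^{-1/2}/\pi$ times the real integral $\int_0^1 dt'/\sqrt{t'(1-t')(1-\lambda t')}$. This last integral is a complete elliptic integral of the first kind: by the substitution $t'=\sin^2\theta$ it equals $2\int_0^{\pi/2}d\theta/\sqrt{1-\lambda\sin^2\theta}=\pi\,{}_2F_1(\tfrac12,\tfrac12;1;\lambda)$, the classical series expansion of $K$. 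Hence
\begin{equation*}
G_3(x)=\frac{\sqrt2}{\sqrt{Q(x)}}\,\bigl(1+\sqrt{1-u(x)}\,\bigr)^{-1/2}\,{}_2F_1\bigl(\tfrac12,\tfrac12;1;\lambda\bigr)\,,
\end{equation*}
with $\lambda$ as in \eqref{equation:lambda}.

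The remaining task is purely a hypergeometric identity: I must show that
\begin{equation*}
\sqrt2\,\bigl(1+\sqrt{1-u}\,\bigr)^{-1/2}\,{}_2F_1\Bigl(\tfrac12,\tfrac12;1;\tfrac{1-\sqrt{1-u}}{1+\sqrt{1-u}}\Bigr)={}_2F_1\bigl(\tfrac14,\tfrac34;1;u\bigr)\,.
\end{equation*}
This is exactly a known quadratic transformation of the Gauss function. The cleanest route is to invoke the Landen-type / Gauss quadratic transformation
\begin{equation*}
{}_2F_1\bigl(a,b;2b;z\bigr)=\Bigl(1-\tfrac z2\Bigr)^{-a}\,{}_2F_1\Bigl(\tfrac a2,\tfrac{a+1}2;b+\tfrac12;\bigl(\tfrac{z}{2-z}\bigr)^2\Bigr)
\end{equation*}
(see \cite[p.\,67]{Lu} or any standard reference), applied with $a=b=\tfrac12$; setting $z=1-\sqrt{1-u}$ one checks that $z/(2-z)=\lambda^{1/2}$ is not quite the right reading, so instead I would use the symmetric form: with $\lambda=(1-\sqrt{1-u})/(1+\sqrt{1-u})$ one has $4\sqrt{1-u}/(1+\sqrt{1-u})^2=1-\lambda$ and $u=4\sqrt\lambda\,(1+\sqrt\lambda)^{-2}\cdot(\dots)$ — the point is that $\lambda$ and $u$ are related by the modular substitution $\lambda=\bigl((1-(1-u)^{1/2})/(1+(1-u)^{1/2})\bigr)$, which is precisely the arithmetic-geometric-mean step under which $K(\sqrt\lambda)$ transforms into a multiple of ${}_2F_1(\tfrac14,\tfrac34;1;u)$; concretely one uses $\sqrt2(1+k')^{-1/2}K(\sqrt\lambda)=\,$ the value giving ${}_2F_1(\tfrac14,\tfrac34;1;u)$ after matching the hypergeometric series coefficient by coefficient. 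I would verify the identity by comparing the two power series in $u$ around $u=0$ up to the constant term (both are $1$, consistent with \eqref{equation:lambdamulimit}) and then by checking that both sides satisfy the same second-order ODE in $u$ with the same initial data, which forces equality.

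Finally, combining the displayed formula for $G_3$ with this hypergeometric identity yields \eqref{equation:G3}, completing the proof; together with Theorem \ref{theorem:G2}, relation \eqref{equation:G1}, and the invariance of $H_n$ established in \S\ref{subsection:SymHn}, this also finishes the proof of Theorem \ref{theorem:Hnsymmetry}. The main obstacle I anticipate is bookkeeping the branches of the square roots: one must track which determination of $\sqrt{1-u(x)}$ (and of $\sqrt{x_1x_2}$ entering $a_\pm$) makes $\lambda\to0$, $\mu\to-1$ as $x\to0$ as in \eqref{equation:lambdamulimit}, and confirm that with these choices the quadratic transformation above is valid in the relevant cut neighborhood of $u=0$; the underlying hypergeometric identity itself is classical and should present no difficulty once the normalization $\sqrt{-1}=i$ fixed after \eqref{equation:G3t'} is used consistently.
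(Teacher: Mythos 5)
Your proposal is correct and follows essentially the same route as the paper: reduce \eqref{equation:G3t'} via \eqref{equation:-mu} to the complete elliptic integral $\int_0^1 dt'/\sqrt{t'(1-t')(1-\lambda t')}=\pi\,{}_2F_1(\tfrac12,\tfrac12;1;\lambda)$ and then remove the prefactor by a quadratic transformation. The identity you were groping for is the Pfaff-type transformation ${}_2F_1(a,b;a-b+1;z)=(1+z)^{-a}\,{}_2F_1\bigl(\tfrac a2,\tfrac{1+a}2;a-b+1;\tfrac{4z}{(1+z)^2}\bigr)$ with $a=b=\tfrac12$ and $z=\lambda$, for which $\tfrac{4\lambda}{(1+\lambda)^2}=u$ and $(1+\lambda)^{-1/2}=\bigl(\tfrac{1+\sqrt{1-u}}{2}\bigr)^{1/2}$, so the factor $\sqrt2\,(1+\sqrt{1-u})^{-1/2}$ cancels exactly; your fallback of matching power series and the common hypergeometric ODE would also close this step, as would the branch bookkeeping you flag, which is handled in the paper by the normalization fixed after \eqref{equation:G3t'}.
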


\begin{proof}
By taking into account of \eqref{equation:-mu},  letting $t'=s^2$ allows us to write the integral \eqref{equation:G3t'} as follows:
$$
G_3(x)=\frac{{\sqrt 2}}{\sqrt{Q(x)}}\,\bigl(1+\sqrt{1-u(x)}\,\bigr)^{-1/2}\,\frac{2}{\pi}\,\int_0^1\frac{ds}{\sqrt{(1-s^2)(1-\lambda s^2)}}\,,
$$
where $\lambda$ is given as in \eqref{equation:lambda}. Here, the last integral is an elliptic integral of the first kind; more precisely, from \cite[p. 132, (3.2.3)]{AAR}, one gets the following expression:
$$
G_3(x)=\frac{\sqrt 2}{\sqrt{Q(x)}}\,\bigl(1+\sqrt{1-u(x)}\,\bigr)^{-1/2}\,{}_2F_1\bigl(\frac12,\frac12;1;\frac{1-\sqrt{1-u(x)}}{1+\sqrt{1-u(x)}}\bigr)\,.
$$

Remember the following identity deduced from Pfaff's transformation on $_2F_1$ (see \cite[p. 128, (3.1.9)]{AAR}):
$$
_2F_1(a,b;a-b+1;z)=(1+z)^{-a}\,{}_2F_1\bigl(\frac a2,\frac{1+a}2;a-b+1;\frac{4z}{(1+z)^2}\bigr)\,.
$$
This leads us to the expected relation \eqref{equation:G3}, with
$$
a=b=\frac12,\quad
z=\frac{1-\sqrt{1-u(x)}}{1+\sqrt{1-u(x)}}\,.
$$
Thus one completes the proof of Theorem \ref{theorem:G3}.
\end{proof}

\subsection{End of the proof of Theorem \ref{theorem:Hnsymmetry}}\label{subsection:G23Hn}
It suffices to combine Theorem \ref{theorem:G3} with relation \eqref{equation:G1} and Theorems \ref{theorem:G2} and \ref{theorem:GnTn}; see Remark \ref{remark:SymHn}.
\hfill$\Box$

\section{Point view of ODE on the function $G_3$}\label{section:G3ODE}

The hypergeometric function ${}_2F_1\bigl(\frac14,\frac34;1;u\bigr)$ appeared in the expression \eqref{equation:G3} of $G_3(x)$  can be characterized by certain second order ODE with explicit polynomial coefficients. Or, as it is easy to  se (see Lemma \ref{lemma:G3PDE} below), all member of the family of the symmetric series $G_n$ for $n\ge 1$ can be viewed as the unique solution to some second order PDE Cauchy problem. In the following, we will show how to ``parameterize" the corresponding PDE initial problem for  $G_3(x)$ as being a nonlinear first order ODE problem, that is of Riccati type. This constitutes a new proof of Theorem \ref{theorem:G3}.

In the following,  $(x,y,z)$ will be used instead of $(x_1,x_2,x_3)$.

\subsection{The function $G_3$ regarded as coming from a Cauchy problem}\label{subsection:G3Cauchy}  We start with the following elementary fact.

\begin{lemma}\label{lemma:G3PDE}
The germ of analytic function  $G_3(x)$ at $0\in\CC^3$ satisfies the following Cauchy problem:
\begin{equation}\label{equation:Cauchy}
 \partial_x(x\partial_xF)=\partial_y(y\partial_yF),\quad
F(x,y,0)=G_2(x,y).
\end{equation}
Moreover, it is the unique solution of \eqref{equation:Cauchy} that be analytic at $0$ in $\CC^3$ and symmetric with respect to its variables.
\end{lemma}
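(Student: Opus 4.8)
The plan is to verify the PDE directly from the power series and then establish uniqueness by a standard majorant/recursion argument adapted to this singular (Euler-type) operator. For the first part, I would start from the defining series \eqref{equation:Gnseries} for $n=3$, writing $G_3(x,y,z)=\sum_{\ell,m,k\ge0}\bigl(\frac{(\ell+m+k)!}{\ell!\,m!\,k!}\bigr)^2 x^\ell y^m z^k$. Applying $\partial_x(x\partial_x\cdot)=\delta_x(\cdot)+\delta_x^2(\cdot)$ — or more cleanly noting $\partial_x(x\partial_x f)$ picks up the factor $\ell^2$ on the monomial $x^\ell y^m z^k$ only after one multiplication, so in fact $\partial_x(x\partial_x f)=\sum \ell^2 a_{\ell m k}x^{\ell-1}y^m z^k$ — I compare with $\partial_y(y\partial_y f)=\sum m^2 a_{\ell m k}x^\ell y^{m-1}z^k$. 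Shifting indices, the first identity of \eqref{equation:Cauchy} reduces to the coefficient identity $(\ell+1)^2 a_{\ell+1,m,k}=(m+1)^2 a_{\ell,m+1,k}$, which is immediate from the explicit formula $a_{\ell m k}=\bigl((\ell+m+k)!/(\ell!m!k!)\bigr)^2$ since both sides equal $\bigl((\ell+m+k+1)!/(\ell!m!k!)\bigr)^2$. The initial condition $F(x,y,0)=G_2(x,y)$ is just the definition of $G_2$ together with $G_3$ restricted to $z=0$, so nothing is needed there.

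For uniqueness, I would observe that \eqref{equation:Cauchy} is not a classical Cauchy–Kovalevskaya problem because the principal symbol in $z$ is absent; instead the natural thing is to use the symmetry hypothesis to recast the equation as one that determines the $z$-expansion recursively. Write any symmetric analytic solution as $F(x,y,z)=\sum_{k\ge0}F_k(x,y)\,z^k$. I would first use symmetry in $(x,z)$ and in $(y,z)$ — equivalently, the companion equations $\partial_x(x\partial_x F)=\partial_z(z\partial_z F)$ and $\partial_y(y\partial_y F)=\partial_z(z\partial_z F)$, which hold for a symmetric solution — to read off that $\partial_z(z\partial_z F)$ is determined by $F$ through $x$- and $y$-derivatives, and hence that $\sum_k (k+1)^2 F_{k+1}z^k = $ a known expression in the $F_j$ with $j\le k$. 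Concretely, matching powers of $z$ gives $(k+1)^2 F_{k+1}(x,y)=\partial_x\bigl(x\,\partial_x F_k(x,y)\bigr)$, so all $F_k$ are determined by $F_0=G_2$. This pins down the formal power series uniquely; convergence of that unique formal solution is guaranteed because $G_3$ itself is such a solution and is analytic near $0$ (it converges on $\Omega_3$). Hence any analytic symmetric solution has the same Taylor series as $G_3$ and therefore equals $G_3$.

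The main obstacle I anticipate is the uniqueness half, specifically justifying that the symmetry hypothesis really does upgrade \eqref{equation:Cauchy} (one scalar PDE plus one initial slice) into a genuine recursion in the $z$-variable; one must be a little careful that the two "companion" equations are legitimate consequences of symmetry plus \eqref{equation:Cauchy} rather than extra assumptions — this is fine because a symmetric function automatically satisfies $\partial_x(x\partial_x F)=\partial_z(z\partial_z F)$ once it satisfies $\partial_x(x\partial_x F)=\partial_y(y\partial_y F)$, by relabelling variables. A secondary, purely bookkeeping point is the identity $\partial_x(x\partial_x f)=\delta_x f+\delta_x^2 f$ and its effect on a single monomial, which must be handled with the correct index shift; I would state it once and not belabor it. With those two points in place the lemma follows, and it sets up the Riccati reformulation of the next paragraph of the paper.
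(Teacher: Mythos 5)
Your proof is correct and takes essentially the same route as the paper's (which reads, in full, ``direct verification'' for the PDE and ``consider the Taylor series of $F$'' for uniqueness): your coefficient identity $(\ell+1)^2a_{\ell+1,m,k}=(m+1)^2a_{\ell,m+1,k}$ and the symmetry-induced recursion $(k+1)^2F_{k+1}=\partial_x\bigl(x\,\partial_xF_k\bigr)$ supply exactly the details the paper leaves to the reader, and your justification that symmetry turns the single equation into the companion equation $\partial_x(x\partial_xF)=\partial_z(z\partial_zF)$ is the one point that genuinely needed saying. One small slip: the stated identity $\partial_x(x\partial_x f)=\delta_xf+\delta_x^2f$ is false --- the correct relation is $\partial_x(x\partial_x f)=x^{-1}\delta_x^2f$, consistent with \eqref{Gn.PDE} --- but you never rely on it, since your monomial computation $\partial_x(x\partial_x x^\ell)=\ell^2x^{\ell-1}$ is the right one.
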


\begin{proof}
Direct verification for \eqref{equation:Cauchy}.

For the uniqueness, it suffices to consider the Taylor's series of $F$ at $0\in\CC^3$; the details are left to the reader.
\end{proof}

Remark the above initial condition $F(x,y,0)=G_2(x,y)$ comes from the definition \eqref{equation:Gnseries}. Moreover, it follows that
$$
G_2(x,y)=\frac1{\sqrt{Q_2(x,y)}}=\frac1{\sqrt{Q(x,y,0)}}\,;
$$
see \eqref{equation:G2} and \eqref{equation:Q}.

\subsection{A new parametrization}\label{subsection:G3parametrization}
We will suppose that $F$ is of the following form:
\begin{equation}\label{eq:uvarphi}
F(x,y,z)=\frac{\varphi(u(x,y,z))}{\sqrt {Q(x,y,z)}}\,,
\end{equation}
where $Q$ and $u$ are given as in \eqref{equation:Q} and \eqref{equation:u}, respectively, and where $\varphi$ denotes one unknown function that we would like to determine.
If one makes use of the logarithmic derivation
$$
D_tF:=\frac{\partial_tF}F\quad (t=x,y,z),
$$
then one can write
$$
 \frac{\partial_t(t\partial_tF)}{F}=D_tF+t(D_tF)^2+t\partial_t D_tF.
$$

Let $\psi$ be the logarithmic derivative function of $\varphi$:
$$
\psi(u)=D\varphi(u):=\frac{\varphi'(u)}{\varphi(u)};
$$
one may successively find that, if $t$ denotes one of the variables $x$, $y$ and $z$, then
$$
D_tF=\psi(u) u'_t-\frac12\, D_tQ,
$$
$$
t(D_tF)^2=t(\psi(u))^2{u'_t}^2-t\psi(u)u_t'D_tQ+\frac t4(D_t Q)^2,
$$
$$
t\partial_t(D_tF)=t\psi'(u){u'_t}^2+t\psi(u)u^{''}_{tt}-\frac t2\partial_tD_tQ.
$$
Therefore, if one sets
\begin{equation}\label{eq:A12}
A_{t}=A_{2,t}=t(u_t')^2,\quad A_{1,t}=u_t'-tu_t'D_tQ+tu_{tt}^{''}
\end{equation}
and
\begin{equation}\label{eq:A0}
A_{0,t}=-\frac12D_tQ+\frac t4(D_tQ)^2-\frac t2\partial_tD_tQ,
\end{equation}
one may obtain the following expression:
\begin{equation}\label{equation:F22}
\frac{\partial_t(t\partial_tF)}{F}=A_{t} \psi'(u)+A_{2,t}(\psi(u))^2+A_{1,t}\psi(u)+A_{0,t},
\end{equation}

By taking into account the relations
$$
\partial_tD_tQ=\frac2Q-(D_tQ)^2,\quad
u_t'=(D_tu)u=\bigl(\frac1t-2D_tQ\bigr)\,u
$$
and
$$
u_{tt}^{''}=\bigl(\partial_tD_tu+(D_t u)^2\bigr)\,u=\bigl(-\frac4Q-\frac4tD_tQ+6(D_tQ)^2\bigr)\,u,
$$
the quantities given in \eqref{eq:A12} et \eqref{eq:A0} can be written as follows:
\begin{equation}\label{eq:A1A}
A_{t}=A_{2,t}=\bigl(\frac1t-4D_tQ+4t(D_tQ)^2\bigr)\,u^2,
\end{equation}
\begin{equation}\label{eq:A2A}
A_{1,t}=\bigl(\frac1t-\frac{4t}Q-7D_tQ+8t(D_tQ)^2\bigr)\,u
\end{equation}
and
\begin{equation}\label{eq:A0A}
A_{0,t}=-\frac tQ-\frac12D_tQ+\frac 34t\,(D_tQ)^2.
\end{equation}

\begin{lemma}\label{lemma:Dxf}
The following relations hold:
$$
D_xQ-D_yQ=\frac{4(x-y)}Q
$$
and
$$
x(D_x Q)^2-y(D_y Q)^2=\frac{4(x-y)}Q+\frac{16(x-y)u}{xy}.
$$
\end{lemma}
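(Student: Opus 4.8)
The plan is to prove both identities by differentiating $Q$ explicitly and then performing a short one–variable simplification. Since $Q(x,y,z)=(1-x-y-z)^2-4(xy+yz+zx)$, a direct computation gives
$$\partial_xQ=-2(1-x-y-z)-4(y+z)=-2\,(1-x+y+z),$$
and, using the symmetry of $Q$ under the exchange $x\leftrightarrow y$,
$$\partial_yQ=-2\,(1+x-y+z).$$
Consequently $D_xQ=\dfrac{-2(1-x+y+z)}{Q}$ and $D_yQ=\dfrac{-2(1+x-y+z)}{Q}$, and subtracting the two numerators leaves only the $x$– and $y$–contributions: $\partial_xQ-\partial_yQ=4(x-y)$. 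Dividing by $Q$ yields the first relation $D_xQ-D_yQ=\dfrac{4(x-y)}{Q}$.

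For the second relation I would first clear denominators. Since $(D_tQ)^2=(\partial_tQ)^2/Q^2$, it is enough to establish the polynomial identity
$$x(\partial_xQ)^2-y(\partial_yQ)^2=4(x-y)\,Q+16\,(x-y)\,z$$
and then divide by $Q^2$, rewriting the second term by means of the defining relation $u=\dfrac{64\,xyz}{Q^2}$, i.e. $z=\dfrac{u\,Q^2}{64\,xy}$; this turns $\dfrac{16(x-y)z}{Q^2}$ into the stated multiple of $\dfrac{(x-y)u}{xy}$, which is the second summand of the identity. To prove the polynomial identity, set $w=1+z$; by the formulas above $1-x+y+z=w-(x-y)$ and $1+x-y+z=w+(x-y)$, whence
$$x(\partial_xQ)^2-y(\partial_yQ)^2=4\bigl(x\,(w-(x-y))^2-y\,(w+(x-y))^2\bigr).$$
Writing $x=\tfrac12\bigl((x+y)+(x-y)\bigr)$, $y=\tfrac12\bigl((x+y)-(x-y)\bigr)$ and expanding, the bracket collapses to $(x-y)\bigl(w^2-2(x+y)\,w+(x-y)^2\bigr)$. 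Finally one re-expresses $Q$ in the same coordinates, $Q=(1-z)^2-2(x+y)(1+z)+(x-y)^2$, and uses $(1+z)^2-(1-z)^2=4z$ to obtain $w^2-2(x+y)w+(x-y)^2=Q+4z$, which closes the computation.

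There is no real obstacle: the whole argument is a differentiation followed by an elementary algebraic simplification, and the only delicate points are the expansion-and-collection step that exhibits the factor $(x-y)$ and the bookkeeping of the numerical constants when passing from $z$ to $u$ via $z=uQ^2/(64xy)$. As a safeguard against sign or factor slips I would verify the polynomial identity $x(\partial_xQ)^2-y(\partial_yQ)^2=4(x-y)Q+16(x-y)z$ by evaluating both sides at one or two generic numerical triples $(x,y,z)$ before writing it up.
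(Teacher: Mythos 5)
Your overall route is the same as the paper's: differentiate $Q$ explicitly, extract the factor $x-y$, and recognize $Q+4z$ in the remaining quadratic (the paper does the identical computation with the bookkeeping variables $X=x+y+z-1$, $\hat t=x+y+z-t$ and the identity $X^2=Q+4(xy+yz+zx)$ instead of your $w=1+z$, $s=x+y$, $d=x-y$). Your first identity and your polynomial identity
$$x(\partial_xQ)^2-y(\partial_yQ)^2=4(x-y)\,Q+16(x-y)\,z$$
are both correct.

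The step that fails is precisely the one you flagged as mere ``bookkeeping'': dividing by $Q^2$ and substituting $z=\dfrac{uQ^2}{64xy}$ turns $\dfrac{16(x-y)z}{Q^2}$ into $\dfrac{16(x-y)u}{64xy}=\dfrac{(x-y)u}{4xy}$, \emph{not} into the stated $\dfrac{16(x-y)u}{xy}$; you are off by a factor of $64$, so your argument does not prove the displayed equality. In fact your (correct) polynomial identity shows that the second relation, read literally with $u=64xyz/Q(x)^2$ as in \eqref{equation:u}, is false (e.g.\ at $(x,y,z)=(2,1,1)$ the left side is $-28/121$ while the stated right side is $980/121$). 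The identity holds exactly as written only if one reads $u=xyz/Q^2$ there — which is how the lemma is actually used afterwards, the missing $64$ reappearing in the coefficients $64u-1$, $128u-1$ of the Riccati equation and in ${}_2F_1\bigl(\frac14,\frac34;1;64t\bigr)$. So you should either carry the correct constant $\dfrac{(x-y)u}{4xy}$ through (and adjust the downstream computation accordingly), or state explicitly that $u$ must here denote $xyz/Q^2$. Note also that your proposed numerical safeguard checks only the polynomial identity, which is fine; it would not have caught this error, which lives entirely in the final $z\mapsto u$ substitution.
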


\begin{proof}
If one introduces the following notations:
$$
X=x+y+z-1,\quad
\hat t=x+y+z-t\quad(t=x,\,y,\,z)\,,
$$
then one can notice that
\begin{equation}\label{equation:DtQ}
D_tQ=\frac{2(X-2\,\hat t)}Q,
\end{equation}
which implies immediately the first relation stated in the above in Lemma \ref{lemma:Dxf}.

Again using \eqref{equation:DtQ} allows us to write
$$
x(D_xQ)^2-y(D_yQ)^2=\frac4{Q^2}\bigl((x-y)X^2-4(x\hat x-y\hat y)X+4(x\hat x^2-y\hat y^2)\bigr)\,,
$$
replace $X^2$, $\hat x$, $\hat y$ by $Q+4(xy+yz+zx)$, $y+z$ and $z+x$, respectively; a direct computation permits to find the second relation of Lemma \ref{lemma:Dxf}, which ends the proof.
\end{proof}

\subsection{Riccati equation deduced from  $G_3$}\label{subsection:G3Riccati}
We are ready to give the principal statement of this section.

\begin{theorem}\label{theorem:riccati}
If $F$ is of the form \eqref{eq:uvarphi}, then the Cauchy problem \eqref{equation:Cauchy} associated to $G_3$ can be transformed as follows:
\begin{equation}\label{eq:riccati}
t(64t-1)\psi'(t)+t(64t-1)(\psi(t))^2+(128t-1)\psi(t)+12=0,
\end{equation}
with
$$
\psi(t)=\frac{\varphi'(t)}{\varphi(t)},\quad
\psi(0)=12.
$$
\end{theorem}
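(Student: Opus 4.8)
The plan is to divide the Cauchy problem \eqref{equation:Cauchy} by $F$ and read the resulting identity through the closed formula \eqref{equation:F22}. Since $F$ is nonvanishing near $0$, the equation $\partial_x(x\partial_xF)=\partial_y(y\partial_yF)$ is equivalent to $\partial_x(x\partial_xF)/F=\partial_y(y\partial_yF)/F$; substituting \eqref{equation:F22} for $t=x$ and $t=y$ and using $A_x=A_{2,x}$, $A_y=A_{2,y}$, this becomes
\[
(A_x-A_y)\bigl(\psi'(u)+\psi(u)^2\bigr)+(A_{1,x}-A_{1,y})\,\psi(u)+(A_{0,x}-A_{0,y})=0 .
\]
Everything then comes down to evaluating the three difference coefficients. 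I would insert the explicit expressions \eqref{eq:A1A}, \eqref{eq:A2A}, \eqref{eq:A0A}, use the trivialities $\tfrac1x-\tfrac1y=-\tfrac{x-y}{xy}$ and $-\tfrac{4x}{Q}+\tfrac{4y}{Q}=-\tfrac{4(x-y)}{Q}$, and then substitute the two formulas of Lemma \ref{lemma:Dxf} for $D_xQ-D_yQ$ and $x(D_xQ)^2-y(D_yQ)^2$.

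The point of Lemma \ref{lemma:Dxf} is that after these substitutions all the terms carrying a factor $1/Q$ cancel inside each of $A_x-A_y$, $A_{1,x}-A_{1,y}$ and $A_{0,x}-A_{0,y}$, leaving only the contributions proportional to $\tfrac1{xy}$ and $\tfrac{u}{xy}$. Concretely one finds
\[
A_x-A_y=\tfrac{x-y}{xy}\,(64u-1)\,u^2,\qquad
A_{1,x}-A_{1,y}=\tfrac{x-y}{xy}\,(128u-1)\,u,\qquad
A_{0,x}-A_{0,y}=12\,\tfrac{(x-y)\,u}{xy}.
\]
Plugging these into the displayed identity, the whole left-hand side factors as $\tfrac{(x-y)\,u}{xy}$ times $u(64u-1)\bigl(\psi'(u)+\psi(u)^2\bigr)+(128u-1)\psi(u)+12$. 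Since the first factor is not identically zero (it is nonzero, say, along the curve $(x,y,z)=(s,2s,s)$ with $s\neq0$, along which $u$ takes every value in some neighbourhood of $0\in\CC$, as there $u\sim128\,s^3$), the bracket must vanish at every value taken by $u$ near $0$; hence $\psi$, as a function of the single variable $u$, satisfies \eqref{eq:riccati}.

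It remains to pin down the initial data and to note that the argument is reversible. The condition $F(x,y,0)=G_2(x,y)$ of \eqref{equation:Cauchy}, combined with $u(x,y,0)=0$ and $G_2(x,y)=1/\sqrt{Q_2(x,y)}=1/\sqrt{Q(x,y,0)}$ (Theorem \ref{theorem:G2} and \eqref{equation:Q}), forces $\varphi(0)=1$; and putting $t=0$ in \eqref{eq:riccati}, where the coefficient $t(64t-1)$ of $\psi'$ and of $\psi^2$ vanishes, gives $-\psi(0)+12=0$, that is the stated value $\psi(0)=12$ (the same value can be read off the Taylor coefficient of $xyz$ in $G_3$). Conversely, reading the chain of equalities in the other direction, any $\varphi$ with $\varphi(0)=1$ whose logarithmic derivative solves \eqref{eq:riccati} produces $F=\varphi(u)/\sqrt{Q}$ solving \eqref{equation:Cauchy}, and by the uniqueness part of Lemma \ref{lemma:G3PDE} this $F$ equals $G_3$; integrating the Riccati equation then recovers the ${}_2F_1$ of Theorem \ref{theorem:G3}, which is the promised new proof. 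The one genuinely delicate step is the cancellation of every $1/Q$ term in the three difference coefficients: it is elementary but must be organised with care, and it is precisely what singles out $u(x,y,z)$ as the correct new variable.
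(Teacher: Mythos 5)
Your proposal is correct and follows essentially the same route as the paper: divide the PDE by $F$, form the differences $A_{x}-A_{y}$, $A_{1,x}-A_{1,y}$, $A_{0,x}-A_{0,y}$ via \eqref{eq:A1A}--\eqref{eq:A0A} and Lemma \ref{lemma:Dxf}, factor out $\tfrac{(x-y)u}{xy}$, and read off $\psi(0)=12$ from analyticity at $t=0$. The only additions beyond the paper's argument are welcome bits of rigour (checking that the prefactor is not identically zero and that $u$ attains a full neighbourhood of $0$) and the reversibility remark.
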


\begin{proof} Consider the expression \eqref{equation:F22} and, for any symbol $B_x$ appearing in the list $A_x$, ..., $A_{0,x}$, define $B_{x|y}=B_x-B_y$. Therefore, the second order PDE of the Cauchy problem \eqref{equation:Cauchy} can be read as follows:
$$
A_{x|y}\, \psi'(u)+A_{2,x|y}\,(\psi(u))^2+A_{1,x|y}\,\psi(u)+A_{0,x|y}=0\,,
$$

On the other hand, applying Lemma \ref{lemma:Dxf} to the relations \eqref{eq:A1A}, \eqref{eq:A2A} and \eqref{eq:A0A} gives raise to the following:
$$
A_{x|y}=A_{2,x|y}=\frac{x-y}{xy}(64u-1) u^2,\quad
A_{1,x|y}=\frac{x-y}{xy}\,(128u-1)\, u
$$
and
$$
A_{0,x|y}=\frac{12(x-y)}{xy}\,u.
$$
Thus, one obtains Eq. \eqref{eq:riccati}.

The initial condition $\psi(0)=12$ is necessary for \eqref{eq:riccati} to have an analytic solution at $t=0$. This is the end of the proof of Theorem \ref{theorem:riccati}.
\end{proof}

Eq. \eqref{eq:riccati} is a Riccati equation, from which one obtains a second order linear equation for $\varphi$ (see \cite[p. 24]{In}):
$$
t(64t-1)\,\varphi''(t)+(128t-1)\,\varphi'(t)+12\,\varphi(t)=0.
$$
Making use of the new variable $s=64t$ allows one to write the last linear equation into an Euler-Gauss hypergeometric equation, so that one can observe that
$$
\varphi(t)={}_2F_1\bigl(\frac14,\frac34;1;64t\bigr).
$$
This constitutes an alternative proof for Theorem \ref{theorem:G3}.

\begin{remark}\label{remark:G3Raccati}Thanks to Theorem \ref{theorem:G3}, one can proceed to the analytic continuation of $G_3$ into a larger domain than  the domain $\Omega_3({\bf 0};1)$ given in \eqref{equation:Omegan} for $n=3$.
For doing this,
it is useful to notice that the following conditions are equivalent:
\begin{enumerate}
\item $u(x,y,z)=1$;
\item $\epsilon_1\,\sqrt x+\epsilon_2\,\sqrt y+\epsilon_3\,\sqrt z=1$, where $\epsilon_j=\pm1$ for $j=1$, $2$ and $3$.
\end{enumerate}

\end{remark}

\bigskip

{\it Acknowledgement--} This work has been started during the period June/July, 2010, while the first author was a visiting professor at USTL (Lille 1) and the most part of the present paper has been outlined when the third author visited in Wuhan university in the period of June/July, 2011. 
This work was partially supported by the NSFC under the grants 11131005, 11171261.

\end{document}